\DeclareSymbolFont{rsfs}{U}{rsfs}{m}{n}
\DeclareSymbolFontAlphabet{\mathcal}{rsfs}
\def\ps@pprintTitle{%
  \let\@oddhead\@empty
  \let\@evenhead\@empty
  \let\@oddfoot\@empty
  \let\@evenfoot\@oddfoot
}
\numberwithin{equation}{section}
\newtheorem{theorem}{Theorem}[section]
\newtheorem{lemma}{Lemma}[section]
\newtheorem{proposition}[theorem]{Proposition}
\begin{document}
 \begin{frontmatter}
 \title{Existence and uniqueness of weak solutions to the Smoluchowski coagulation equation with source and sedimentation}
 \tnotetext[t1]{The authors are listed alphabetically}
\date{}
\author[1,2]{Prasanta Kumar Barik \corref{cor1}} \ead{barik@ugr.es,prasant.daonly01@gmail.com}
\author[3]{Asha K. Dond}\ead{ashadond@iisertvm.ac.in}
\author[3]{Rakesh Kumar}\ead{rakeshiitb21@gmail.com, rakeshmath21@iisertvm.ac.in}
\cortext[cor1]{Corresponding author Tel.: +91 80 6695 3731}
\address[1]{Instituto de Matemáticas,\\
Universidad de Granada,
 Rector López Argüet, 
S/N, 18001 Granada,
Spain}
\address[2]{Departamento de Matemática Aplicada,\\
Universidad de Granada,
Avenida de Fuentenueva S/N, 18071 Granada, Spain}

\address[3]{School of Mathematics,\\
Indian Institute of Science Education and Research
Thiruvananthapuram, \\
Thiruvananthapuram-695551, India.}

\begin{abstract}
This article is devoted to a generalized version of Smoluchowski's coagulation equation. This model describes the time evolution of a system of aggregating particles under the effect of external input and output particles. We show that for a large class of coagulation kernels, output rates, and exponentially decaying input rates, there is a weak solution. Moreover, the solution satisfies the mass-conservation property for linear coagulation rate and an additional condition on input and output rates. The uniqueness of weak solutions  is also established by applying additional restrictions on the rates. 
\end{abstract}
\end{frontmatter}

\section{Introduction}
\noindent Let us first recall that the Smoluchowski coagulation equation is a mean-field model that describes how small particles merge together to form bigger ones. These types of models are used in different fields of science. For example,  the formation of stars and planets in astrophysics, polymerization in chemistry, animal grouping in biology, etc. The Smoluchowski coagulation model was first introduced by Marino Smoluchowski \cite{Smoluchowski:1917} in his seminal work  by considering each particle size as a positive integer to describe the coagulation of colloids moving according to a Brownian motion. By treating each particle size as a positive real number, M\"{u}ller \cite{Muller:1928} proposed the continuous version of the Smoluchowski coagulation equation (SCE) in the form of a partial integro-differential equation and which is also known as the \textit{Smoluchowski coagulation equation}. Let $\zeta(t, p) \ge 0$ be the concentration of particles of mass $p \in \mathbb{R}_{>0}:= (0, \infty)$ at time $t \ge 0$, and then the dynamics of $\zeta $ is given by \cite{Barik:2018, Laurencot:2015} and which reads   
\begin{align}\label{SCE}
\frac{\partial \zeta(t, p) }{\partial t}  =  \mathcal{C}_a ( \zeta(t, p) ) - \mathcal{C}_d ( \zeta(t, p) ),
\end{align}
where
\begin{align*}
 \mathcal{C}_a ( \zeta(t, p) ) := \frac{1}{2} \int_{0}^{p} \mathcal{A}(p-q, q) \zeta(t, p-q) \zeta(t, q) dq,    
\end{align*}
and
\begin{align*}
 \mathcal{C}_d ( \zeta(t, p) ) :=  \int_{0}^{\infty} \mathcal{A}(p, q) \zeta(t, p) \zeta(t, q) dq.    
\end{align*}
The familiar terms $ \mathcal{C}_a ( \zeta(t, p) )$ and $ \mathcal{C}_d ( \zeta(t, p) )$  in \eqref{SCE} describe the appearance of particles of size $p$ after coalescing particles of size $q$ and $p-q$ and the disappearance of particles of size $p$ after combining with particles of size $q$ due to the coagulation process, respectively. In addition, the non-negative and symmetric function $\mathcal{A}(p, q)$ is known as the \textit{coagulation rate} or kernel and describes the rate at which particles of size $p$ unite with a particle of size $q$ to create a larger one of size $p+q$.

In this article, we study another mathematical model which is an extension model of the Smoluchowski coagulation equation \eqref{SCE}, see \cite{Ananda:2012, Foret:2012, Makoveeva:2022}. This type of model has many applications in science such as chemistry, cloud physics, and oceanography. If $\zeta(t, p) $ be the concentration of particle of size $p$ at time $t$, then this model can be expressed as: 
\begin{align}\label{SCEs}
\frac{\partial \zeta(t, p)}{\partial t}  =  \mathcal{C}_a ( \zeta(t, p) ) - \mathcal{C}_d ( \zeta(t, p) )  + \mathcal{S}(p) -  \mathcal{R}(p) \zeta(t, p),
\end{align}
with the initial value
\begin{align}\label{Initialdata}
\zeta(0, p) = \zeta^{in}(p)\ge 0~ \mbox{a.e.}
\end{align}
We can observe that the first two terms on the right-hand side of \eqref{SCEs} are the same as those in \eqref{SCE}. The term $\mathcal{S}(p)$ describes the rate at which the particles of size $p$ are injected into the system. We assume that the injection or source term decays faster for large-size particles. On the right-hand side to \eqref{SCEs}, the last term is known as the removal term, which removes the particle of size $p$ from the system at the rate $\mathcal{R}(p) \zeta(p)$. In addition,  $\mathcal{R}(p)$ is known as the removal co-efficient. From the application point of view, the term $\mathcal{R}(p) \zeta(p)$ is also known as the sedimentation of particles due to gravity and in chemistry, this term also describes the removal of product crystals when considering bulk crystal growth in supercooled melts and supersaturated solutions \cite{Alexandrov:2022, Buyevich:1994}.
Next, the total mass of particles for the generalized coagulation equation \eqref{SCEs} can be defined as:
 \begin{align}\label{Totalmass}
\mathcal{M}_1(t)=\mathcal{M}_1(\zeta(t)):=\int_0^{\infty} p \zeta(t, p)\ dp,  \ \ t \ge 0.
\end{align}
In general, for the linear coagulation rate, the coagulation equations conserved the mass. However, due to the source  and removal terms, we expect the total mass in the system does not conserve. The failures of mass conservation property in physics literature is known as the \textit{gelation process} and the finite time at which this process occurs is known as the \emph{gelation time} \cite{Escobedo:2003, Leyvraz:1981}. The solution $\zeta$  may hold the mass-conservation property for some special cases.

 Prior to entering into the details of this work, let us first look at the available literature related to the coagulation-fragmentation model (CF) and the generalized coagulation model. The existence, uniqueness, and asymptotic behavior of solutions to CF equations have been discussed in many articles by considering different growth conditions on the kernels, see \cite{Stewart:1989, Stewart:1990, Barik:2018, Barik:2021, Giri:2012, Laurencot:2015}. However, the well-posedness of weak solutions to \eqref{SCEs}--\eqref{Initialdata} have not been discussed yet. Even though there are a few articles in which different types of solutions for the discrete version of the model \eqref{SCEs} have been addressed, see \cite{Kuehn:2019, White:1982, Crump:1982, Hayakawa:1987, Hendriks:1984, Hendriks:1985, Lushnikov:1976}. More precisely, the existence of steady-state solutions has been discussed in \cite{White:1982, Crump:1982, Hayakawa:1987, Hendriks:1984, Hendriks:1985}. In \cite{Kuehn:2019}, the authors recently discussed the existence and uniqueness of solutions to \eqref{SCEs}--\eqref{Initialdata} for a large class of coagulation rates whereas the removal coefficients grow sufficiently fast for large particle sizes. In addition, they have also addressed the equilibrium solutions. However, the corresponding continuous models have not been studied up to that level in the mathematical community. Even though there are a few articles that are based on the Smoluchowski coagulation equation with source, see \cite{Laurencot:2020, Ferreira:2022, Ferreira:2023}. In \cite{Laurencot:2020}, the existence of a stationary solution is discussed. In \cite{Cristian:2022}, the non-existence of stationary solutions has been discussed for some special cases. The existence of self-similar solutions has been discussed by the authors in \cite{Ferreira:2022}. Recently, non-equilibrium solutions is discussed by authors for the multi-component coagulation equations with source term, see \cite{Ferreira:2023}.   To the best of our knowledge, this is the first attempt to discuss rigorously the well-posedness of weak solutions to the Smoluchowski coagulation with source and sedimentation for both linear and product-type coagulation kernels.

Let us end this section by describing the content of the paper. We present some preliminary results, assumptions, and main results in Section 2. In Section 3, the existence and uniqueness of approximated solutions to \eqref{SCEs}--\eqref{Initialdata} is shown by using the Picard-Lindel\"{o}f theorem. Also, we prove the existence of weak solutions by using a weak $L^1$ compactness technique in this section. In addition, we also show that the solution satisfies the mass conservation property in this section. An analysis of the uniqueness of the solution is demonstrated in the final section.

\section{Assumptions, Preliminaries results and the Statement of the Main Results}
 Let us begin this section by describing the assumptions on the coagulation rate $\mathcal{A}$, input term $\mathcal{S}$, reaction rate $\mathcal{R}$, and the initial data $\zeta^{in}$. 
We assume that the coagulation kernel $\mathcal{A}$ is a non-negative and symmetric measurable function that satisfies
 $\mathcal{A}(p, q) = \mathcal{A}(q, p)$, for all $(p, q) \in \mathbb{R}_{>0} \times \mathbb{R}_{>0}$.
 In addition, it also satisfies the following class of growth conditions for some non-negative constant $A$:\\
Class-I: linear kernel:
 \begin{equation}\label{SumK}
\mathcal{A}(p, q):=\begin{cases}
A,\       & \text{if}\ p+q < 1,\ \\
A (p+q),\ &  \text{if}\ p+q > 1,
\end{cases}
\end{equation}
or\\
Class-II: Product kernel: for some function $\eta: (0, \infty) \rightarrow [1, \infty)$
 \begin{equation}\label{ProductK}
\mathcal{A}(p, q):=\begin{cases}
A,\ & \text{if}\ (p, q) \in (0,1)^2,\ \\
A \eta(q), \ &  \text{if}\ (p, q) \in (0,1) \times (1, \infty), \  \\
A \eta(p), \ &  \text{if}\ (p, q) \in  (1, \infty) \times (0, 1),\ \\
A \eta(p) \eta(q), \  &  \text{if}\ (p, q) \in (1, \infty)^2.
\end{cases}
\end{equation}
where $\eta$ satisfies the following conditions:
\begin{align}\label{RateCoag}
\eta^{\ast} :=\sup_{p \ge 1} \frac{\eta(p)}{(1+p)} < \infty\ \ \text{and} \ \lim_{p \to \infty} \frac{\eta(p)}{p} =0.  
\end{align}
From \eqref{SumK}, \eqref{ProductK} and \eqref{RateCoag}, we infer that
\begin{align}\label{EstimateCoagRate}
\mathcal{A}(p, q) \le A (1+p)(1+q).    
\end{align}
Let us assume that the injection term $\mathcal{S}$ is a non-negative and exponentially decreasing function that satisfies:
\begin{equation}\label{RateInjection}
\mathcal{S} \in L^1_{0, 1}(\mathbb{R}_{>0}).
\end{equation} 
Furthermore, in the case of the linear coagulation rate, we assume that the removal coefficient $\mathcal{R}(p)$ is any measurable function, while for the product kernel case, the removal coefficient $\mathcal{R}(p)$ is an increasing function and which satisfies
\begin{equation}\label{RateRemovable}
\mathcal{R}(p) \leq k (1+p)^{\alpha } ,\ \forall p \in \mathbb{R}_{>0},
\end{equation}
where $ k \ge 0$ and $\alpha \in [0, 1)$.\\
Finally, we assume that the initial data $\zeta^{in}$ satisfies the following condition:
\begin{equation}\label{ConditionIntial}
\zeta^{in} \in L^1_{0, 1}(\mathbb{R}_{>0}).
\end{equation}

Now, we are in a position to state the main theorems of this paper.
\begin{theorem}\label{TheoremSCEs}
 Consider a function $\zeta^{in}$ satisfying \eqref{ConditionIntial}. In addition, $\mathcal{S}$ and $\mathcal{R}$ enjoy assumptions  \eqref{RateInjection} and \eqref{RateRemovable}, respectively.\\
 $(i)$ \ \ If the function $\mathcal{A}$ satisfies \eqref{SumK}. Then there exists at least one weak solution $\zeta$  to \eqref{SCEs}--\eqref{Initialdata} such that
 \begin{align}\label{TheoremEquation1}
  \zeta \in \mathcal{C}([0, \infty)_w; L^1_{0, 1}(\mathbb{R}_{>0} ) ) \cap L^{\infty}( (0, T);   L_{0, 1}^1(\mathbb{R}_{>0} ) )  \ \text{for each}\ T>0.
 \end{align}
 where $\mathcal{C}([0, \infty)_w; L^1_{0, 1}(\mathbb{R}_{>0} ) )$ denotes the space of all weakly continuous functions from $[0, \infty)$ to $L^1_{0, 1}(\mathbb{R}_{>0} )$. More precisely, a function $g \in \mathcal{C}([0, \infty)_w; L^1_{0, 1}(\mathbb{R}_{>0} ) )$ if  
 \begin{align*}
     t \mapsto \int_0^{\infty } \omega(p) (1+p) g (t, p) dp 
 \end{align*}
 is continuous on $[0, \infty)$ for all $\omega \in L^{\infty}(\mathbb{R}_{>0} ) $.\\
 Furthermore, if 
 \begin{align}\label{Cond R S}
     \int_0^{\infty} p \mathcal{S}(p) dp =   \int_0^{\infty} p \mathcal{R}(p) \zeta(s, p)  dp,
 \end{align}
 then the solution $\zeta$ satisfies the mass conservation property, i.e.,
  \begin{align}\label{MCPS}
   \mathcal{M}_1(t) =   \int_0^{\infty} p \zeta^{in}(p)  dp.
 \end{align}
 $(ii)$ \ \ If the function $\mathcal{A}$ satisfies \eqref{ProductK}. Then there exists at least one weak solution $\zeta$  to \eqref{SCEs}--\eqref{Initialdata} such that
 \begin{align}\label{TheoremEquation2}
  \zeta \in \mathcal{C}([0, \infty)_w; L^1(\mathbb{R}_{>0} ) )  \cap L^{\infty}( (0, T);   L_{0, 1}^1(\mathbb{R}_{>0} ) ) \ \text{for each}\ T>0.
 \end{align}
 
 Furthermore, the solution $\zeta$ in both cases satisfies the following weak formulation
 \begin{align}\label{definition}
\int_0^{\infty} [ \zeta(t, p) - \zeta^{in}(p)] \omega(p) dp=&\frac{1}{2}\int_0^t \int_0^{\infty} \int_{0}^{\infty}\tilde{\omega}(p, q) \mathcal{A}(p, q) \zeta(s, p) \zeta(s, q) dq dp ds\nonumber\\
&+\int_0^t \int_0^{\infty} {\omega}(p)[ \mathcal{S}(p) - \mathcal{R}(p) \zeta(s, p)] dp ds,
\end{align}
where
\begin{align}\label{Identity}
\tilde{\omega} (p, q):=\omega(p+q)-\omega (p)-\omega(q)
\end{align}
for every $t \in [0, T]$ and $\omega \in L^{\infty}(\mathbb{R}_{>0})$.
 \end{theorem}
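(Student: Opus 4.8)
The plan is to obtain the weak solution of \eqref{SCEs}--\eqref{Initialdata} as a weak-$L^1$ limit of solutions to a family of truncated problems, which is the standard compactness scheme for coagulation equations. First I would regularise the problem by setting $\mathcal{A}_n(p,q):=\mathcal{A}(p,q)\mathbf{1}_{\{p+q<n\}}$, $\mathcal{S}_n:=\mathcal{S}\mathbf{1}_{(0,n)}$ and $\zeta^{in}_n:=\zeta^{in}\mathbf{1}_{(0,n)}$, and consider \eqref{SCEs} posed on $(0,n)$ with these data. On the Banach space $L^1(0,n)$ the truncated right-hand side is Lipschitz on bounded sets---the coagulation kernel is now bounded and the removal term is affine---so the Picard--Lindel\"of theorem furnishes a unique local solution $\zeta_n$; testing with the weight $\omega\equiv1$, for which $\tilde\omega=-1\le0$ by \eqref{Identity}, produces an a priori $L^1$ bound that excludes blow-up and makes $\zeta_n$ global, while the usual sign argument gives $\zeta_n\ge0$.

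The next step is a set of bounds uniform in $n$, obtained by inserting admissible weights into the weak formulation of the truncated equation. The two decisive choices are $\omega\equiv1$ and $\omega(p)=p$: for the former the coagulation contribution is non-positive, and for the latter it vanishes identically since $\tilde\omega(p,q)=(p+q)-p-q=0$. Hence the zeroth and first moments are controlled by the source alone, and \eqref{RateInjection} together with \eqref{ConditionIntial} yields $\sup_{[0,T]}\|\zeta_n(t)\|_{L^1_{0,1}}\le C(T)$ independently of $n$. Tightness at infinity then follows from the first-moment bound through $\int_{\{p>R\}}\zeta_n\le R^{-1}\int_0^\infty p\,\zeta_n$, and to preclude concentration I would use a de la Vall\'ee--Poussin argument: choose a convex superlinear $\Phi$ with $\int\Phi(\zeta^{in})<\infty$ and propagate a uniform bound on $\int\Phi(\zeta_n(t))$, the coagulation gain being dominated by the loss in the $\Phi$-weighted estimate. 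These are exactly the Dunford--Pettis conditions, so $\{\zeta_n(t)\}$ is relatively weakly sequentially compact in $L^1(\mathbb{R}_{>0})$ for each $t$.

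With pointwise-in-time compactness in hand I would prove equicontinuity of $t\mapsto\int_0^\infty\omega\,\zeta_n\,dp$ for fixed $\omega\in L^\infty$ by bounding its time derivative via the weak formulation and the uniform moment estimates, and a weak-topology Arzel\`a--Ascoli argument then extracts a subsequence with $\zeta_n\rightharpoonup\zeta$ in $\mathcal{C}([0,\infty)_w;L^1)$. The linear source and removal terms pass to the limit directly from weak $L^1$ convergence; in the product-kernel case the sublinear growth $\mathcal{R}(p)\le k(1+p)^\alpha$ with $\alpha\in[0,1)$ from \eqref{RateRemovable} is absorbed by the first moment and equi-integrability. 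The genuine difficulty, and the step I expect to be the main obstacle, is the quadratic coagulation term $\tfrac12\iint\tilde\omega\,\mathcal{A}\,\zeta_n(p)\zeta_n(q)$, because weak $L^1$ convergence does not commute with products. I would resolve this by showing that $\zeta_n\otimes\zeta_n$ is weakly compact in $L^1(\mathbb{R}_{>0}^2)$ and identifying its limit as $\zeta\otimes\zeta$, which holds on tensor test functions since $\iint\phi(p)\psi(q)\zeta_n\zeta_n=\big(\int\phi\zeta_n\big)\big(\int\psi\zeta_n\big)\to\big(\int\phi\zeta\big)\big(\int\psi\zeta\big)$, and then extending to general bounded kernels by density. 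Because $\mathcal{A}$ is unbounded I would combine this with a truncation of the kernel, passing first to the limit in $n$ and then removing the truncation; the remainder is controlled by \eqref{EstimateCoagRate} and the uniform first-moment and equi-integrability bounds. This tail control is most delicate in Class-II, where $\mathcal{A}\sim\eta(p)\eta(q)$ grows almost linearly in each argument, and it is precisely the condition $\lim_{p\to\infty}\eta(p)/p=0$ in \eqref{RateCoag} that makes $\int_{\{p>R\}}\eta(p)\zeta_n$ small uniformly in $n$ and closes the argument.

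Finally, for the mass-conservation statement in part $(i)$ I would test \eqref{definition} with $\omega(p)=p$. The coagulation term then vanishes because $\tilde\omega\equiv0$, leaving $\tfrac{d}{dt}\mathcal{M}_1(t)=\int_0^\infty p\,\mathcal{S}(p)\,dp-\int_0^\infty p\,\mathcal{R}(p)\zeta(t,p)\,dp$, which is zero by the balance condition \eqref{Cond R S}, giving \eqref{MCPS}. Since $\omega(p)=p$ is not bounded, I would justify this through the truncated weights $\omega_R(p)=p\wedge R$---for which $\tilde\omega_R\le0$ and $\tilde\omega_R\to0$ pointwise---and let $R\to\infty$; the vanishing of the truncated coagulation term requires a uniform second-moment bound, which is available for the additive Class-I kernel because its structure precludes finite-time gelation, and the exponential decay of $\mathcal{S}$ guarantees that the corresponding source moments are finite.
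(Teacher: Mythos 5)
Your overall scheme---truncation, Picard--Lindel\"{o}f on $L^1(0,n)$, positivity by a sign argument, uniform $L^1_{0,1}$ bounds from the weights $\omega\equiv 1$ and $\omega(p)=p$, de la Vall\'{e}e--Poussin equi-integrability, time equicontinuity, Dunford--Pettis plus a weak Arzel\`{a}--Ascoli extraction, and term-by-term limit passage---is the same as the paper's, up to two harmless variations: you use the conservative cut-off $\mathcal{A}\mathbf{1}_{\{p+q<n\}}$ where the paper uses $\mathcal{A}\chi_{(0,n)}(p)\chi_{(0,n)}(q)$, and you identify the limit of the quadratic term through weak compactness of $\zeta_n\otimes\zeta_n$ and a tensor/monotone-class argument, where the paper invokes the standard product-convergence lemmas from the literature. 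Both variations are legitimate, and your treatment of the Class-II tail via $\lim_{p\to\infty}\eta(p)/p=0$ is exactly the role the hypothesis \eqref{RateCoag} plays in the paper.

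The genuine gap is in part $(i)$, for the kernel \eqref{SumK}. Wherever you need uniform control of large sizes you invoke only the first-moment bound and Markov's inequality, $\int_{\{p>R\}}\zeta_n\,dp\le R^{-1}\int p\,\zeta_n\,dp$. That gives tightness of $\zeta_n$ itself, hence weak compactness in $L^1(\mathbb{R}_{>0})$, which suffices for part $(ii)$; but part $(i)$ asserts convergence and weak continuity in the weighted space $L^1_{0,1}$, and the limit passage in the coagulation integral for $\mathcal{A}(p,q)=A(p+q)$ produces remainders of the form $\int_{\{p>\lambda\}}p\,\zeta_n(t,p)\,dp$ (times $\|\zeta_n\|_{L^1}$), which the first-moment bound does \emph{not} make small uniformly in $n$ and $t$; your claim that this remainder is ``controlled by \eqref{EstimateCoagRate} and the uniform first-moment and equi-integrability bounds'' fails precisely here. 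What is missing is the paper's Lemma \ref{LargeLemma}: apply de la Vall\'{e}e--Poussin to the finite measures $p\,\zeta^{in}(p)\,dp$ and $p\,\mathcal{S}(p)\,dp$ to obtain a convex superlinear $\sigma_1$, then propagate $\sup_{t\le T}\int\sigma_1(p)\zeta_n(t,p)\,dp\le\Xi(T)$ via \eqref{Convexp6} and Gronwall. (Your de la Vall\'{e}e--Poussin step propagates $\int\Phi(\zeta_n)\,dp$, i.e.\ equi-integrability in the \emph{value} variable, which is the paper's Lemma \ref{LemmaUniformIntegrability}; it does not control size tails.) A second, smaller defect: your mass-conservation argument rests on ``a uniform second-moment bound, which is available for the additive Class-I kernel.'' It is not: \eqref{ConditionIntial} does not assume $\int p^2\zeta^{in}\,dp<\infty$, and finiteness of the second moment is not created by the evolution. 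The step can be rescued without it---either by the paper's route, where the superlinear-moment estimate \eqref{C(T1)} forces the truncated mass flux to vanish, or elementarily by noting $|\tilde\omega_R(p,q)|\le\min(p,q)\mathbf{1}_{\{p+q>R\}}$ with $\omega_R(p)=p\wedge R$, so that $|\tilde\omega_R|\mathcal{A}\le 2Apq\,\mathbf{1}_{\{p+q>R\}}$, and $\iint pq\,\zeta(s,p)\zeta(s,q)\,dq\,dp=\mathcal{M}_1(s)^2$ is bounded, allowing dominated convergence as $R\to\infty$---but as written your justification of this step is incorrect.
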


\begin{theorem}\label{UniquenessTheoremCRBK}
Let $\zeta$ be a weak solution to \eqref{SCEs}--\eqref{Initialdata}. If the coagulation equation satisfies 
\begin{align}\label{UniqueThm}
\mathcal{A}(p, q) \le A (1+p)^{1/2}(1+q)^{1/2},\ \   \text{for all} \ \  (p, q) \in \mathbb{R}_{>0} \times \mathbb{R}_{>0},\ \text{for some} \ A \ge 0,
\end{align}
with the same source and removal coefficient as defined in Theorem \ref{TheoremSCEs}.  Then \eqref{SCEs}--\eqref{Initialdata} admits a unique weak solution.
 \end{theorem}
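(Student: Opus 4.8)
The plan is to run a weighted $L^1$ contraction (Gronwall) argument on the difference of two weak solutions. Let $\zeta_1,\zeta_2$ be two weak solutions of \eqref{SCEs}--\eqref{Initialdata} sharing the initial datum $\zeta^{in}$, the source $\mathcal S$ and the removal rate $\mathcal R$; set $\Phi:=\zeta_1-\zeta_2$ and introduce the weighted norm
\begin{align*}
U(t):=\int_0^\infty (1+p)^{1/2}\,|\Phi(t,p)|\,dp .
\end{align*}
Since both solutions lie in $L^\infty((0,T);L^1_{0,1}(\mathbb{R}_{>0}))$, the quantity $M_1(t):=\int_0^\infty(1+p)\,[\zeta_1(t,p)+\zeta_2(t,p)]\,dp$ is bounded on $[0,T]$ by Theorem \ref{TheoremSCEs}, and I intend to use \emph{only} this first-moment bound. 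The goal is to establish $U(t)\le \int_0^t A\,M_1(s)\,U(s)\,ds$, which, together with $U(0)=0$, forces $U\equiv0$ by Gronwall's lemma, hence $\Phi\equiv0$.

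Subtracting the two weak formulations \eqref{definition}, the source term cancels and the bilinear term factorises through $\zeta_1(p)\zeta_1(q)-\zeta_2(p)\zeta_2(q)=\zeta_1(p)\Phi(q)+\Phi(p)\zeta_2(q)$. Choosing, at least formally, the weight-times-sign test function $\omega(p)=(1+p)^{1/2}\,\mathrm{sgn}(\Phi(s,p))$ and exploiting the symmetry of $\mathcal A$, a relabelling $p\leftrightarrow q$ in the second product lets me write, with $w(p):=(1+p)^{1/2}$ and $\sigma:=\mathrm{sgn}(\Phi)$,
\begin{align*}
U(t)={}&\tfrac12\int_0^t\!\int_0^\infty\!\int_0^\infty G(s,p,q)\,\mathcal A(p,q)\,[\zeta_1+\zeta_2](s,p)\,\Phi(s,q)\,dq\,dp\,ds\\
&-\int_0^t\!\int_0^\infty w(p)\,\mathcal R(p)\,|\Phi(s,p)|\,dp\,ds,
\end{align*}
where $G(s,p,q)=w(p+q)\sigma(s,p+q)-w(p)\sigma(s,p)-w(q)\sigma(s,q)$. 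The removal contribution is manifestly nonpositive and may be discarded.

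For the coagulation contribution, the bounds $|\sigma|\le1$ together with $\sigma(q)\Phi(q)=|\Phi(q)|$ give $G(p,q)\Phi(q)\le[w(p+q)+w(p)-w(q)]\,|\Phi(q)|$, and the subadditivity $\sqrt{1+p+q}\le\sqrt{1+p}+\sqrt{1+q}$ yields $w(p+q)-w(q)\le w(p)$, whence $G(p,q)\Phi(q)\le 2(1+p)^{1/2}|\Phi(q)|$. Since $\zeta_1+\zeta_2\ge0$, inserting the kernel bound \eqref{UniqueThm} produces the product structure $(1+p)^{1/2}\cdot(1+p)^{1/2}(1+q)^{1/2}=(1+p)(1+q)^{1/2}$, so that the coagulation term is controlled by $A\bigl(\int_0^\infty(1+p)[\zeta_1+\zeta_2]\,dp\bigr)\bigl(\int_0^\infty(1+q)^{1/2}|\Phi|\,dq\bigr)=A\,M_1(s)\,U(s)$, using $(1+q)^{1/2}\le(1+q)$. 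This is precisely the balance forced by the exponents in \eqref{UniqueThm}, and is the reason the first moment alone suffices.

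The main obstacle is purely technical: the test function $(1+p)^{1/2}\,\mathrm{sgn}(\Phi)$ is neither in $L^\infty(\mathbb{R}_{>0})$ nor smooth, so \eqref{definition} cannot be invoked with it directly. I would first extend \eqref{definition} to all $\omega$ of at most linear growth, $|\omega(p)|\le C(1+p)$, by truncating with $\mathbf 1_{[0,n]}$ and passing to the limit; the first-moment bound together with \eqref{UniqueThm} guarantees the absolute integrability $\int_0^\infty\!\int_0^\infty(1+p+q)\,\mathcal A(p,q)\,\zeta_i(p)\zeta_i(q)\,dq\,dp<\infty$ required for this passage. The time-dependent sign factor is handled either by replacing $\mathrm{sgn}$ with a smooth monotone approximation $\mathrm{sgn}_\varepsilon$ and letting $\varepsilon\to0$, or by using that $t\mapsto\int_0^\infty w|\Phi|\,dp$ is absolutely continuous and differentiating through the absolute value (the $\partial_t\sigma$ contribution vanishing a.e. where $\Phi\ne0$). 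The removal term, being sign-definite, is dealt with by a monotone truncation of $w$, so that its possibly infinite limit only improves the inequality. Once these justifications are in place, the displayed integral inequality and Gronwall's lemma close the argument and yield uniqueness.
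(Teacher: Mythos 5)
Your proposal follows essentially the same route as the paper's own proof: both arguments estimate a weighted $L^1$ norm of the difference of two solutions with weight comparable to $(1+p)^{1/2}$ (the paper uses $\max\{1,p^{1/2}\}$), both use the factorization $\zeta_1(p)\zeta_1(q)-\zeta_2(p)\zeta_2(q)=\zeta_1(p)\Phi(q)+\Phi(p)\zeta_2(q)$ together with the symmetry of $\mathcal{A}$, both discard the sign-definite removal term, and both close with Gronwall's lemma. Two differences are worth recording. First, where the paper splits the key kernel estimate into five cases according to the positions of $p$, $q$, $p+q$ relative to $1$, you get the same control in one line from the subadditivity $\sqrt{1+p+q}\le\sqrt{1+p}+\sqrt{1+q}$, which gives $G(s,p,q)\Phi(s,q)\le 2(1+p)^{1/2}|\Phi(s,q)|$ and hence the bound $A\,M_1(s)\,U(s)$; this is cleaner and yields a better constant than the paper's $36A\Lambda(T)$. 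Second, you explicitly address the fact that $(1+p)^{1/2}\mathrm{sgn}(\Phi)$ is neither bounded nor time-independent, hence not an admissible test function in \eqref{definition}; the paper simply differentiates formally, so on this point your write-up is the more careful one.

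One claim in your technical paragraph is wrong as stated and should be repaired. You assert that \eqref{definition} extends to all $\omega$ with $|\omega(p)|\le C(1+p)$ because $\int_0^\infty\int_0^\infty(1+p+q)\,\mathcal{A}(p,q)\,\zeta_i(p)\,\zeta_i(q)\,dq\,dp<\infty$ follows from the first-moment bound and \eqref{UniqueThm}. It does not: since $(1+p+q)\,\mathcal{A}(p,q)\le A\left[(1+p)^{3/2}(1+q)^{1/2}+(1+p)^{1/2}(1+q)^{3/2}\right]$, finiteness of that integral requires moments of order $3/2$, which are not available for the weak solutions of Theorem \ref{TheoremSCEs}. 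Fortunately you never need linear-growth test functions: your actual test function has growth $(1+p)^{1/2}$, and for $|\omega(p)|\le C(1+p)^{1/2}$ one has, using $(1+p+q)^{1/2}\le(1+p)^{1/2}(1+q)^{1/2}$, the bound $|\tilde{\omega}(p,q)|\,\mathcal{A}(p,q)\le 3CA(1+p)(1+q)$, which is integrable against $\zeta_i(p)\zeta_j(q)$ by the first-moment bound alone. Restricting your extension lemma to this class of test functions (which suffices for the argument) closes the gap, and the rest of the proof goes through.
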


The strategy of the proof of the Theorem \ref{TheoremSCEs} is based on the classical $L^1$ weak compactness technique by applying the  Dunford-Pettis theorem and a variant of the Arzel\`{a}-Ascoli theorem, see \cite{Vrabie:1995}. For this, we assume additional integrability conditions on $\zeta^{in}$ and $\mathcal{S}$ by applying a refined version of de la Vall\'{e}e-Poussin theorem see  \cite[Theorem~2.8]{Laurencot:2015} which are described below.  More precisely, let us first construct two non-negative, convex, and non-decreasing functions $\sigma_1, \sigma_2  \in \mathcal{C}^{1}([0, \infty))$ such that
\begin{description}
  \item[(i)] $\sigma_j(0)=\sigma_j'(0)=0$ and $\sigma_j'$ is concave;
  \item[(ii)] $\lim_{p \to \infty} \sigma_j'(p) =\lim_{p \to \infty} \frac{ \sigma_j(p)}{p}=\infty$
  for $j=1, 2$.
\end{description}
Let us now recall a refined version of De la Vall\'{e}e-Poussin theorem for $p=1$ see  \cite[Theorem~2.29]{Fonseca:2007}.
\begin{theorem}\label{TheoremDelaVallee}
Let $(X, \mu )$ be a measure space, and let $\mathcal{D} \subset L^1(X)$ be a bounded set. Then the following statements are equivalent:\\
$(a.)$ $\mathcal{D} $ is equi-integrable;\\
$(b.)$ there exists an increasing and convex function $\sigma : (0, \infty ) \mapsto [0, \infty ]$, with 
\begin{align*}
    \lim_{p \to \infty } \frac{\sigma(p)}{p} = \infty \ \ \text{such that}
\end{align*}
\begin{align*}
    \sup_{\zeta \in  \mathcal{D}  }  \int_X \sigma( |  \zeta |  ) d\mu < \infty.
\end{align*}
\end{theorem}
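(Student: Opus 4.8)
The plan is to prove the two implications separately, adopting throughout the \emph{tail} characterisation of equi-integrability, namely that $\mathcal{D}$ is equi-integrable precisely when $\lim_{M\to\infty}\sup_{\zeta\in\mathcal{D}}\int_{\{|\zeta|>M\}}|\zeta|\,d\mu=0$. This is the notion that matches the Dunford--Pettis setting invoked later in the existence argument and is the one genuinely equivalent to statement $(b.)$ on a general measure space. For the implication $(b.)\Rightarrow(a.)$, which I expect to be routine, suppose such a $\sigma$ exists with $C:=\sup_{\zeta\in\mathcal{D}}\int_X\sigma(|\zeta|)\,d\mu<\infty$. Fixing $K>0$, the superlinearity $\sigma(t)/t\to\infty$ furnishes an $M_K$, independent of $\zeta$, with $\sigma(t)\ge K t$ for all $t>M_K$, so on $\{|\zeta|>M_K\}$ one has $|\zeta|\le K^{-1}\sigma(|\zeta|)$. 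Hence
\[
\int_{\{|\zeta|>M_K\}}|\zeta|\,d\mu\le\frac{1}{K}\int_X\sigma(|\zeta|)\,d\mu\le\frac{C}{K}.
\]
Taking the supremum over $\mathcal{D}$ and letting $K\to\infty$ yields the tail condition; note that only superlinearity, not convexity, is used here.

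The substance of the theorem is the converse $(a.)\Rightarrow(b.)$, which is constructive and is where I expect the main difficulty to lie. Starting from equi-integrability, I would select an increasing sequence of thresholds $c_n\uparrow\infty$ such that $\sup_{\zeta\in\mathcal{D}}\int_{\{|\zeta|>c_n\}}|\zeta|\,d\mu\le 2^{-n}$ for every $n\ge1$; the tail condition is exactly what permits this choice. I then define the non-decreasing, right-continuous step function $g(t):=\#\{n\ge1:c_n\le t\}=\sum_{n\ge1}\mathbf{1}_{[c_n,\infty)}(t)$ and set $\sigma(t):=\int_0^t g(s)\,ds$. Since $g$ is non-decreasing, $\sigma$ is automatically convex; since $g(t)\to+\infty$, the slope $\sigma'=g$ diverges, so $\sigma$ is increasing and superlinear with $\sigma(t)/t\to\infty$. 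The decisive step is the telescoping identity $\sigma(t)=\sum_{n\ge1}(t-c_n)^+$, obtained by integrating the series for $g$ term by term.

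With this representation the uniform bound becomes transparent: by Tonelli's theorem,
\[
\int_X\sigma(|\zeta|)\,d\mu=\sum_{n\ge1}\int_X(|\zeta|-c_n)^+\,d\mu\le\sum_{n\ge1}\int_{\{|\zeta|>c_n\}}|\zeta|\,d\mu\le\sum_{n\ge1}2^{-n}=1,
\]
uniformly over $\zeta\in\mathcal{D}$, which is exactly $(b.)$. The $L^1$-boundedness of $\mathcal{D}$ enters to ensure each term $\int_X(|\zeta|-c_n)^+\,d\mu\le\|\zeta\|_{L^1(X)}$ is finite and that the threshold selection is meaningful. The main obstacle is thus not analytic depth but the simultaneous verification that the single function $\sigma$ produced from the dyadic tail bounds is convex, superlinear, and uniformly $\sigma$-integrable over the whole family; the identity $\sigma(t)=\sum_n(t-c_n)^+$ is what reconciles these three requirements at once. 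Finally, I would remark that the further regularity used in the existence proof for the pair $\sigma_1,\sigma_2$ (that $\sigma\in\mathcal{C}^1$ with $\sigma(0)=\sigma'(0)=0$ and $\sigma'$ concave) lies beyond the stated equivalence, but can be arranged by a routine smoothing and concave-majorisation of the piecewise-constant slope $g$ without affecting the uniform bound.
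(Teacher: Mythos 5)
Your proof is correct, but there is nothing in the paper to compare it against: the paper does not prove Theorem \ref{TheoremDelaVallee} at all, it simply recalls it from \cite[Theorem~2.29]{Fonseca:2007}. What you have written is the standard De la Vall\'ee-Poussin argument found in that reference and in \cite[Theorem~2.8]{Laurencot:2015}: the easy direction $(b.)\Rightarrow(a.)$ via the threshold bound $\sigma(t)\ge Kt$ for $t>M_K$, and the converse by choosing levels $c_n\uparrow\infty$ with $\sup_{\zeta\in\mathcal{D}}\int_{\{|\zeta|>c_n\}}|\zeta|\,d\mu\le 2^{-n}$, setting $\sigma(t)=\int_0^t g(s)\,ds$ with $g=\sum_{n\ge1}\chi_{[c_n,\infty)}$, and using the representation $\sigma(t)=\sum_{n\ge1}(t-c_n)^+$ together with Tonelli to obtain the uniform bound $\le 1$; all of these steps are sound, including the verification $\sigma(t)/t\to\infty$ from $g\to\infty$. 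Two points that you assert should be spelled out. First, you work throughout with the tail characterisation of equi-integrability; if equi-integrability is taken in the $\varepsilon$--$\delta$ (uniform absolute continuity) sense of \cite{Fonseca:2007}, then the passage to the tail condition is precisely where the hypothesis that $\mathcal{D}$ is bounded in $L^1(X)$ is used, via Chebyshev, $\mu(\{|\zeta|>M\})\le \sup_{\zeta\in\mathcal{D}}\|\zeta\|_{L^1(X)}/M$ uniformly in $\zeta$; this is one line, but without boundedness the implication $(a.)\Rightarrow(b.)$ fails, so the line belongs in the proof rather than in a parenthetical remark. Second, your $\sigma$ vanishes identically on $[0,c_1]$, so it is non-decreasing rather than strictly increasing; this is consistent with the weak sense in which ``increasing'' is used in \cite{Fonseca:2007}, and is in any case repaired by replacing $\sigma(t)$ with $\sigma(t)+t$, which preserves convexity and superlinearity and only changes the uniform bound to $1+\sup_{\zeta\in\mathcal{D}}\|\zeta\|_{L^1(X)}$. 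Your closing observation is also accurate: the additional regularity the paper requires of $\sigma_1,\sigma_2$ (namely $\sigma_j\in\mathcal{C}^1([0,\infty))$, $\sigma_j(0)=\sigma_j'(0)=0$, $\sigma_j'$ concave) is not part of this equivalence but of the refined version in \cite[Theorem~2.8]{Laurencot:2015}, obtainable by smoothing and concave majorisation of the slope as you indicate.
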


Since  $\mathcal{S} \in L_{0, 1}^1(\mathbb{R}_{>0})$, then applying the above-refined version of de la Vall\'{e}e-Poussin theorem, we ensure that there exist two non-negative convex functions $\sigma_1$ and $\sigma_2$ such that 
\begin{align}\label{Convexp1}
\sigma_i(0)=0,~~~\lim_{p \to {\infty}}\frac{\sigma_i(p)}{p}=\infty,~~~~i=1,2,
\end{align}
\begin{align}\label{Convexp3}
\Gamma_3 := \int_0^{\infty}\sigma_1(p) \mathcal{S}(p) dp <\infty,~~\text{and}~~\Gamma_4 :=\int_0^{\infty}{\sigma_2( \mathcal{S} (p))} dp < \infty.
\end{align}
Using the above-refined version of de la Vall\'{e}e-Poussin theorem  once more for $\zeta^{in} \in L_{0, 1}^1(\mathbb{R}_{>0})$, then it can easily be seen that there also exist similar non-negative convex functions $\sigma_1$ and $\sigma_2$ such that both  $\sigma_1$ and $\sigma_2$ satisfy \eqref{Convexp1}, and
\begin{align}\label{Convexp2}
\Gamma_1 := \int_0^{\infty}\sigma_1(p) \zeta^{in}(p) dp< \infty,~~\text{and}~~\Gamma_2 :=\int_0^{\infty}{\sigma_2(\zeta^{in}(p))} dp<\infty.
\end{align}
Let us recall some additional properties of the above convex functions $\sigma_1$ and $\sigma_2$ from \cite{Laurencot:2015}. 
\begin{lemma} Consider $\sigma_1$, $\sigma_2$ in $\mathcal{C}^1{ ( [0, \infty ))  }$ such that $\sigma'_1$ and $\sigma'_2$ are concave. Then, we have
\begin{equation}\label{Convexp4}
\hspace{-5cm} \sigma_2(p_1)\le p_1\sigma'_2(p_1)\le 2\sigma_2(p_1),
\end{equation}
\begin{equation}\label{Convexp5}
\hspace{-5.5cm} p_1\sigma_2'(p_2)\le \sigma_2(p_1)+\sigma_2(p_2),
\end{equation}
and
\begin{equation}\label{Convexp6}
0 \le \sigma_1(p_1+p_2)-\sigma_1(p_1)-\sigma_1(p_2)\le  2\frac{p_1\sigma_1(p_2)+p_2\sigma_1(p_1)}{p_1+p_2},
\end{equation}
 for all $p_1, p_2 \in \mathbb{R}_{>0}$.
\end{lemma}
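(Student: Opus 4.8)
The plan is to derive the three estimates in turn, relying throughout on the normalization $\sigma_j(0)=\sigma_j'(0)=0$ established when the functions were constructed, on the monotonicity of $\sigma_j'$ (since $\sigma_j$ is convex), and on the concavity of $\sigma_j'$. For \eqref{Convexp4}, the left inequality is immediate from convexity: the tangent line to $\sigma_2$ at $p_1$ evaluated at the origin gives $0=\sigma_2(0)\ge \sigma_2(p_1)+\sigma_2'(p_1)(0-p_1)$, i.e. $\sigma_2(p_1)\le p_1\sigma_2'(p_1)$. For the right inequality I would write $\sigma_2(p_1)=\int_0^{p_1}\sigma_2'(s)\,ds$ and use that $\sigma_2'$, being concave with $\sigma_2'(0)=0$, lies above its chord through the origin, so $\sigma_2'(s)\ge (s/p_1)\sigma_2'(p_1)$ on $[0,p_1]$; integrating yields $\sigma_2(p_1)\ge \tfrac12 p_1\sigma_2'(p_1)$.

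Estimate \eqref{Convexp5} then follows by combining convexity with \eqref{Convexp4}: the tangent inequality at $p_2$ reads $\sigma_2(p_1)\ge \sigma_2(p_2)+\sigma_2'(p_2)(p_1-p_2)$, and rearranging gives $p_1\sigma_2'(p_2)\le \sigma_2(p_1)-\sigma_2(p_2)+p_2\sigma_2'(p_2)$; inserting the bound $p_2\sigma_2'(p_2)\le 2\sigma_2(p_2)$ from \eqref{Convexp4} produces the claim.

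For \eqref{Convexp6}, the lower bound is the superadditivity of $\sigma_1$: since $\sigma_1(0)=0$ and $\sigma_1$ is convex, $\sigma_1(\lambda x)\le \lambda\sigma_1(x)$ for $\lambda\in[0,1]$, and applying this with $x=p_1+p_2$ and $\lambda=p_1/(p_1+p_2)$, resp.\ $\lambda=p_2/(p_1+p_2)$, then adding, gives $\sigma_1(p_1)+\sigma_1(p_2)\le \sigma_1(p_1+p_2)$. The upper bound is the crux and the step I expect to be the \emph{main obstacle}, because here mere convexity of $\sigma_1$ is not enough and one must exploit the concavity of $\sigma_1'$. Assuming without loss of generality that $p_1\le p_2$ (both sides are symmetric), I would rewrite the super-additive defect as
\[
\sigma_1(p_1+p_2)-\sigma_1(p_1)-\sigma_1(p_2)=\int_0^{p_1}\big[\sigma_1'(p_2+s)-\sigma_1'(s)\big]\,ds,
\]
and then use that, because $\sigma_1'$ is concave, the map $s\mapsto \sigma_1'(p_2+s)-\sigma_1'(s)$ is non-increasing, so its value never exceeds its value at $s=0$, namely $\sigma_1'(p_2)$ (using $\sigma_1'(0)=0$). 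This gives the bound $p_1\sigma_1'(p_2)$, which by \eqref{Convexp4} is at most $2p_1\sigma_1(p_2)/p_2$.

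Finally I would upgrade this to the stated right-hand side by observing that $p\mapsto \sigma_1(p)/p^2$ is non-increasing, which is itself a direct consequence of \eqref{Convexp4} since $\frac{d}{dp}\big(\sigma_1(p)/p^2\big)=(p\sigma_1'(p)-2\sigma_1(p))/p^3\le 0$. Hence $\sigma_1(p_2)/p_2^2\le \sigma_1(p_1)/p_1^2$, and a short cross-multiplication shows $2p_1\sigma_1(p_2)/p_2\le 2\,(p_1\sigma_1(p_2)+p_2\sigma_1(p_1))/(p_1+p_2)$, which closes the argument. The delicate point is thus twofold: first, to bound the super-additive defect by $p_1\sigma_1'(p_2)$ using the concavity of $\sigma_1'$, and second, to convert this $\sigma_1'$-quantity back into the $\sigma_1$-terms appearing on the right via the monotonicity of $\sigma_1(p)/p^2$.
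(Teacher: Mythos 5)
Your proof is correct in all three parts. Bear in mind, though, that the paper does not actually prove this lemma: it is recalled verbatim from \cite{Laurencot:2015}, so your write-up supplies details the paper omits, and the natural comparison is with the standard argument in that reference. Your treatment of \eqref{Convexp4} and \eqref{Convexp5} coincides with the standard one: the supporting-line inequality at $p_1$ for the left half, the chord bound $\sigma_2'(s)\ge (s/p_1)\sigma_2'(p_1)$ (valid because $\sigma_2'$ is concave with $\sigma_2'(0)=0$) integrated over $[0,p_1]$ for the right half, and \eqref{Convexp5} by combining the tangent inequality at $p_2$ with \eqref{Convexp4}. For the upper bound in \eqref{Convexp6} you take a mildly different route: after bounding the superadditivity defect by $p_1\sigma_1'(p_2)$ (non-increasing increments of the concave $\sigma_1'$, together with $\sigma_1'(0)=0$) and then by $2p_1\sigma_1(p_2)/p_2$ via \eqref{Convexp4}, you recover the symmetric right-hand side through the WLOG ordering $p_1\le p_2$ and the observation that $p\mapsto\sigma_1(p)/p^2$ is non-increasing. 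The usual argument avoids both of these extra steps: subadditivity of the concave derivative gives the defect $\le\min\{p_1\sigma_1'(p_2),\,p_2\sigma_1'(p_1)\}\le 2\min\{p_1\sigma_1(p_2)/p_2,\,p_2\sigma_1(p_1)/p_1\}$, and the right-hand side of \eqref{Convexp6}, being the convex combination $2\bigl(p_2\cdot\tfrac{p_1\sigma_1(p_2)}{p_2}+p_1\cdot\tfrac{p_2\sigma_1(p_1)}{p_1}\bigr)/(p_1+p_2)$ of those same two quantities, automatically dominates their minimum. Both routes are rigorous; yours costs two extra observations but each of them is correctly justified (in particular your cross-multiplication reducing the final step to $p_1^2\sigma_1(p_2)\le p_2^2\sigma_1(p_1)$ checks out). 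You are also right to flag that the normalizations $\sigma_j(0)=\sigma_j'(0)=0$ and convexity, which appear only in the construction preceding the lemma and not in its statement, are genuinely needed: the inequalities fail, for instance, for positive constant functions.
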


\section{Existence of weak solutions}
In this section, our aim is to prove the Theorem \ref{TheoremSCEs}. More precisely, the strategy to prove the Theorem \ref{TheoremSCEs} is achieved in two steps. Step-I: For each $n \in \mathbb{N}$, find a unique classical solution to the following truncated equations:  
\begin{align}\label{TSCEs}
\frac{\partial \zeta_n(t, p)}{\partial t}  = &\mathcal{C}^n_a(\zeta_n (t, p) )  - \mathcal{C}^n_d(\zeta_n (t, p) )  + \mathcal{S}_n(p) - \mathcal{R}_n(p) \zeta_n(t, p),
\end{align}
where
\begin{align*}
\mathcal{C}_a^n(\zeta_n (t, p) ) :=  \frac{1}{2} \int_{0}^{p} \mathcal{A}_n(p-q, q) {\zeta_n}(t, p-q) {\zeta_n}(t, q) dq,
\end{align*}
\begin{align*}
\mathcal{C}_d^n(\zeta_n (t, p) ) := \int_{0}^{n } \mathcal{A}_n(p, q) \zeta_n(t, p) \zeta_n(t, q) dq,
\end{align*}
\begin{equation}\label{Ckterm}
\mathcal{A}_n(p, q) := \mathcal{A}(p, q) \chi_{(0, n)}(p)  \chi_{(0, n)}(q),
\end{equation}
\begin{align}\label{Sterm}
\mathcal{S}_n(p) := \mathcal{S}(p) \chi_{(0, n )}(p),
\end{align}
and
\begin{align}\label{Rterm}
\mathcal{R}_n(p) := \mathcal{R}(p) \chi_{(0, n )}(p),
\end{align}
with the truncated initial condition
\begin{align}\label{TInitial}
\zeta_n(0, p)= \zeta_n^{in}(p),  ~~ \text{for}~~ p\in (0, n).
\end{align}
Here $\chi_{B}$ denotes the characteristic function which is defined as:
\begin{equation*}
\chi_{B} (p):=\begin{cases}
1,\       &  \text{if}\ p \in  B,\ \\
0,\       &  \text{if}\ p \notin B.
\end{cases}   
\end{equation*}

Step-II: A weak $L^1$ compactness technique is applied to the family of solutions $\{ \zeta_n \}_{n >1}$ to obtain the weak solution to \eqref{SCEs}--\eqref{Initialdata}.
\vspace{.5cm}

\textbf{Step-I}: The following proposition solves Step-I.
\begin{proposition}\label{Prop1}
Let $n > 1$. Then, \eqref{TSCEs}, \eqref{TInitial} has a unique non-negative solution $\zeta_n\in \mathcal{C}^1([0,\infty);L^1(0,n))$. In addition, it satisfies
\begin{align}\label{PropMassbound}
 \frac{ d   }{ dt} \int_0^n p \zeta_n(t, p) dp = & \int_0^n \int_{n-p}^n  p\mathcal{A}_n(p, q) {\zeta_n}(t, p) {\zeta_n}(t, q) dq dp \nonumber\\
 & +\int_0^n p \mathcal{S}_n(p) dp -  \int_0^n p\mathcal{R}_n(p) \zeta_n(t, p) dp, 
\end{align}
for $t\ge 0$.
\end{proposition}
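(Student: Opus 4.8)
The plan is to recast the truncated problem \eqref{TSCEs}--\eqref{TInitial} as an abstract ordinary differential equation $\dot{\zeta}_n = F_n(\zeta_n)$ with $\zeta_n(0)=\zeta_n^{in}$ in the Banach space $X:=L^1(0,n)$, where $F_n(\zeta):=\mathcal{C}_a^n(\zeta)-\mathcal{C}_d^n(\zeta)+\mathcal{S}_n-\mathcal{R}_n\zeta$. Because of the truncation, \eqref{EstimateCoagRate} gives $\mathcal{A}_n(p,q)\le A(1+n)^2$ on $(0,n)^2$, the truncated removal rate $\mathcal{R}_n$ is bounded on $(0,n)$ (dominated by $k(1+n)^{\alpha}$ through \eqref{RateRemovable} in the product-kernel case, and locally bounded in the linear case), and $\mathcal{S}_n$ is a fixed element of $X$. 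Using the change of variables $r=p-q$ in $\mathcal{C}_a^n$, a routine bilinear estimate shows that $F_n$ is locally Lipschitz on $X$ and continuous in $t$; the Picard--Lindel\"of theorem in Banach spaces then produces a unique maximal classical solution $\zeta_n\in\mathcal{C}^1([0,T_{\max});X)$.

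Next I would prove that $\zeta_n\ge0$. Exploiting the multiplicative form of the loss terms, I rewrite the equation in integrating-factor form
\[
\zeta_n(t,p)=\zeta_n^{in}(p)\,e^{-\int_0^t\Phi(s,p)\,ds}+\int_0^t e^{-\int_\tau^t\Phi(s,p)\,ds}\big(\mathcal{C}_a^n(\zeta_n(\tau))(p)+\mathcal{S}_n(p)\big)\,d\tau,
\]
with $\Phi(s,p):=\int_0^n\mathcal{A}_n(p,q)\zeta_n(s,q)\,dq+\mathcal{R}_n(p)$. Running the Picard iteration attached to this fixed-point formulation from the non-negative datum $\zeta_n^{in}$, an induction shows each iterate is non-negative, since the exponential weights are positive, $\mathcal{S}_n\ge0$, and $\mathcal{C}_a^n$ preserves non-negativity. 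Passing to the unique fixed point yields $\zeta_n\ge0$ on $[0,T_{\max})$.

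Global existence then follows from an a priori bound in $X$. Integrating \eqref{TSCEs} over $(0,n)$ and using (via the same change of variables) that the coagulation terms do not increase the total particle number, together with the boundedness of $\mathcal{R}_n$ on $(0,n)$, I obtain a differential inequality of the form $\frac{d}{dt}\|\zeta_n(t)\|_X\le\|\mathcal{S}\|_{L^1}+M_n\|\zeta_n(t)\|_X$; Gr\"onwall's lemma keeps $\|\zeta_n\|_X$ finite on every bounded interval, so the blow-up alternative forces $T_{\max}=\infty$. I expect this continuation step, together with the non-negativity, to be the main point: local existence and uniqueness are immediate once the truncated coefficients are recognized as bounded, whereas ruling out finite-time blow-up and preserving positivity require the structural features (positivity of the gain operator, control of the removal term) to be used explicitly.

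Finally, to establish \eqref{PropMassbound} I would multiply \eqref{TSCEs} by $p$ and integrate over $(0,n)$, which is justified since $p\le n$ is bounded there and $\zeta_n\in\mathcal{C}^1([0,\infty);X)$. In the gain contribution $\int_0^n p\,\mathcal{C}_a^n(\zeta_n)\,dp$ I again substitute $r=p-q$ and use the symmetry of $\mathcal{A}$, and then subtract the loss contribution $\int_0^n p\,\mathcal{C}_d^n(\zeta_n)\,dp$. The two integrals agree except on the set of pairs in $(0,n)^2$ whose sum exceeds $n$, i.e. $\{p\in(0,n),\,q\in(n-p,n)\}$, because coagulating such pairs produces particles of size $\ge n$ that leave the truncated domain; the coagulation part therefore reduces to the double integral in \eqref{PropMassbound}, while the moments $\int_0^n p\,\mathcal{S}_n\,dp$ and $\int_0^n p\,\mathcal{R}_n\zeta_n\,dp$ of the source and removal terms carry over directly.
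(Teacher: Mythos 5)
Your proposal follows the same architecture as the paper's proof and is correct in substance: recast \eqref{TSCEs}--\eqref{TInitial} as an ODE in $L^1(0,n)$, obtain a unique maximal solution from the Picard--Lindel\"{o}f theorem using the truncation bound on $\mathcal{A}_n$ (the paper uses $\mathcal{A}_n\le 4An^2$, you use $A(1+n)^2$) and the resulting local Lipschitz estimates, rule out finite-time blow-up by integrating the equation and invoking the blow-up alternative, and derive \eqref{PropMassbound} by multiplying by $p$ and using Fubini plus symmetry. The one genuinely different ingredient is positivity. The paper replaces the gain operator by its positive part $[\mathcal{C}_a^n]_+$, solves the modified problem, and shows via a differential inequality and Gronwall that $\|(-\zeta_n)_+(t)\|_{L^1(0,n)}\le \|(-\zeta_n^{in})_+\|_{L^1(0,n)}\exp(\cdots)=0$, so the modified and original problems coincide. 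You instead rewrite the equation in integrating-factor (Duhamel) form and run a positivity-preserving Picard iteration. Both are standard and workable; the paper's route buys a purely differential argument with no need to discuss a second formulation, whereas your route still owes two small verifications that your sketch elides: that the iterates stay in a ball so the iteration map is a contraction on a short time interval, and that non-negativity propagates from the local interval to all of $[0,T_{\max})$ (e.g.\ by restarting the argument at later times), together with the equivalence of the integral and differential formulations.

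One point you must make explicit: carried out carefully, your change of variables gives
\begin{align*}
\frac{d}{dt}\int_0^n p\,\zeta_n(t,p)\,dp = -\int_0^n\int_{n-p}^n p\,\mathcal{A}_n(p,q)\,\zeta_n(t,p)\,\zeta_n(t,q)\,dq\,dp + \int_0^n p\,\mathcal{S}_n(p)\,dp - \int_0^n p\,\mathcal{R}_n(p)\,\zeta_n(t,p)\,dp,
\end{align*}
i.e.\ the coagulation correction carries a \emph{minus} sign, exactly as your own mechanism dictates (pairs with $p+q\ge n$ produce particles that leave the truncated system, hence mass is lost). The identity \eqref{PropMassbound} as printed has a plus sign; this appears to be a typo in the paper, since the later arguments (the uniform bound of Lemma \ref{Uboundlemma} and the estimate \eqref{C(T1)} of Lemma \ref{LargeLemma}) rely precisely on this term being non-positive so that it can be dropped or moved to the other side of the inequality. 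Your phrase ``reduces to the double integral in \eqref{PropMassbound}'' silently identifies the two signs; state the minus sign explicitly rather than matching the printed formula.
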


\begin{proof}
We wish to prove this proposition by applying the Picard-Lindel\"{o}f theorem or Cauchy-Lipschitz theorem \cite[Theorem 7.3]{Brezis:2011} in the Banach space $L^1(0, n) $. On the one hand, from \eqref{Ckterm} and \eqref{EstimateCoagRate}, we estimate
 \begin{align}\label{bound for kernel}
\mathcal{A}_n(p, q) \le 4 A\ n^2,\ \ \text{for }\ \ n > 1.
 \end{align}
On the other hand, we show that each term in the right-hand side of \eqref{Ckterm} is locally Lipschitz continuous in the space $L^1(0, n)$. \\
For this purpose, let $\zeta^1 \text{and}\ \zeta^2 \in L^1(0, n) $. Then, from \eqref{Ckterm}, Fubini's theorem and the symmetry of $ \mathcal{A}_n$, we evaluate
\begin{align}\label{Lipschitz1}
  \| \mathcal{C}_a^n(\zeta^1) - &  \mathcal{C}_a^n(\zeta^2)  \|_{L^1(0, n)}  \nonumber\\
& \le 2 A n^2 (\| \zeta^1\|_{L^1(0, n)}+ \|\zeta^2\|_{L^1(0, n)}) \|\zeta^1 - \zeta^2\|_{L^1(0, n)},
\end{align}
\begin{align}\label{Lipschitz2}
\|  \mathcal{C}_{d}^n(\zeta^1)- &  \mathcal{C}_{d}^n(\zeta^2) \|_{L^1(0, n) } \nonumber\\
&\le 4 A n^2 (\|\zeta^1\|_{L^1(0, n)}+ \|\zeta^2\|_{L^1(0, n) }) \|\zeta^1 - \zeta^2\|_{L^1(0, n)},
\end{align}
\begin{align}\label{Lipschitz3}
\|  \mathcal{R}_n(p) ( \zeta^1- \zeta^2 )  \|_{L^1(0, n) }  \le  2k n  \| \zeta^1 - \zeta^2\|_{L^1(0, n)}.
\end{align}
Hence, $ \mathcal{C}_{a}^n(\zeta_n)$, $ \mathcal{C}_{d}^n(\zeta_n)$ and $\mathcal{R}_n(p) \zeta_n(p)$ are locally Lipschitz continuous functions in $L^1(0, n)$. Thus, \eqref{TSCEs}, \eqref{TInitial} has a unique solution $\zeta_n \in \mathcal{C}^1([0, \mathcal{T} ) ; L^1(0, n) )$  according to the Picard-Lindel\"{o}f theorem or Cauchy-Lipschitz theorem, see \cite[Theorem 7.3]{Brezis:2011}. Moreover, this solution is defined on a maximal interval $t \in [0, \mathcal{T} )$, $\mathcal{T} \in (0, \infty]$, and either $\mathcal{T} =\infty$ or
\begin{align}\label{norminfinity}
\mathcal{T} < \infty\ \ \text{and} \ \ \lim_{t \to \mathcal{T}  } \| \zeta_n(t)\|_{L^1(0, n) } =\infty.
\end{align}
\textbf{Positivity of solutions:}\\
Since $\mathcal{C}_{a}^n(\zeta_n)$ is a locally Lipschitz continuous functions in $L^1(0, n)$. Thus, the positive part of $[ \mathcal{C}_{a}^n (\zeta_n) ]_{+}$ is also a locally Lipschitz continuous. Similar to previous arguments, we can show that the following initial value problem has also a unique solution
\begin{align}\label{TCSTpositive}
\frac{\partial \zeta_n}{\partial t}  = [ \mathcal{C}_a^n( \zeta_n)]_{+} -  \mathcal{C}_d^n(\zeta_n)  + \mathcal{S}_n - \mathcal{R}_n \zeta_n,
\end{align}
with the same initial data given in \eqref{TInitial}.\\ Let $sign_+(r)=1$, for $r \ge 0$ and $sign_+(r)=0$, for $r < 0$. Then, from \eqref{bound for kernel}, \eqref{TCSTpositive} and $ (-\zeta)_{+} =- sign_{+} (-\zeta)  ( \zeta ) $, we infer that
\begin{align}\label{Truncated1}
& \frac{d}{dt} \| (-\zeta_n)_{+} (t)\|_{L^1(0, n) } \nonumber\\
 = & -\int_0^n sign_{+}(-\zeta_n)(t, p) ([ \mathcal{C}_a^n(\zeta_n (t, p) )]_{+} -  \mathcal{C}_d^n(\zeta_n(t, p)) \nonumber\\
 & +  \mathcal{S}_n(p) - \mathcal{R}_n(p) \zeta_n(t, p)) dp \nonumber\\
\le & \int_0^n \int_0^n sign_{+}(-\zeta_n)(t, p) \mathcal{A}_n(p, q) \zeta_n(t, p) \zeta_n(t, q) dq   dp \nonumber\\
& + \int_0^n sign_{+}(-\zeta_n)(t, p) \mathcal{R}_n(p) \zeta_n(t, p)    dp \nonumber\\
\le & -\int_0^n \int_0^n [- \zeta_n(t, p)]_{+} \mathcal{A}_n(p, q) \zeta_n(t, q) dq  dp -\int_0^n  [- \zeta_n(t, p)]_{+} \mathcal{R}_n(p)  dp \nonumber\\
\le & 4 A n^2 \| \zeta_n(t)\|_{L^1(0, n) } \| (-\zeta_n)_{+}(t) \|_{L^1(0, n)} + 2 k n  \| (-\zeta_n)_{+}(t) \|_{L^1(0, n)}.
\end{align}
After solving the above differential inequality, we obtain
\begin{align}\label{Truncated2}
 \| (-\zeta_n)_{+} & (t)\|_{L^1(0, n )} \nonumber\\
 & \le  \bigg(  \| (-\zeta_n^{in} )_{+} \|_{L^1(0, n)}      \bigg) \exp \bigg( 2 k n t + 4 A n^2 \int_0^t  \| \zeta_n(s)\|_{L^1(0, n)} ds \bigg).  
\end{align}
Using the non-negativity of \eqref{TInitial} and the positive part of $(-\zeta_n^{in})_{+}$ into \eqref{Truncated2}, we get
\begin{align}\label{Truncated3}
 \| (-\zeta_n)_{+} (t)\|_{L^1(0, n) } \le 0.
\end{align}

Finally, we can infer from \eqref{Truncated3} that $ \zeta_n(\cdot, t) \ge 0$ $\forall t \in  [0, \mathcal{T})$. Therefore, both  \eqref{TCSTpositive} and \eqref{TSCEs} are the same equations. Furthermore, $\zeta_n$ satisfies \eqref{PropMassbound}  which can easily be shown by multiplying $p$ into \eqref{TSCEs} and then using the Fubini theorem and the symmetry of $\mathcal{A}_n$ for $t \in  [0, \mathcal{T} )$. Now, our claim is $\mathcal{T}= \infty $. For this, we evaluate the following integral by using 
 \eqref{TSCEs}, the repeated applications of Fubini's theorem, the transformation $p-q=p'$ \& $q=q'$, the symmetry of $\mathcal{A}_n$, as 
 \begin{align}\label{Truncated4}
\frac{d}{dt} \int_0^n  \zeta_n(t, p) dp
= & - \frac{1}{2}\int_0^n \int_{0}^{n-p}   \mathcal{A}_n(p, q) \zeta_n(t, p) \zeta_n(t, q) dq dp\nonumber\\
& - \int_0^n \int_{n-p}^n   \mathcal{A}_n(p, q) \zeta_n(t, p) \zeta_n(t, q) dq dp\nonumber\\
& + \int_0^n    \mathcal{S}_n(p) dp  -  \int_0^n   \mathcal{R}_n(p) \zeta_n(t, p) dp \nonumber\\
\le  & \int_0^n    \mathcal{S}_n(p) dp  \le \|\mathcal{S}\|_{L^1_{0, 1} (\mathbb{R}_{>0} ) }.
\end{align}
 Using the non-negativity of $\zeta_n(\cdot, t)$ $\forall t \in  [0, \mathcal{T})$ 
and the above solving differential inequality to \eqref{Truncated4}, we obtain
  \begin{align}\label{Truncated6}
 \| \zeta_n( t) \|_{L^1(0, n)}
\le & \bigg( \| \zeta_n^{in} \|_{L^1(0, n)} + \|   \mathcal{S}  \|_{L^1_{0, 1} (\mathbb{R}_{>0} )  } t \bigg), \ \ \forall \ t\in [0, \mathcal{T}).
\end{align}
From \eqref{Truncated6}, one can see that if $ \mathcal{T} < \infty$, then
\begin{align*}
 \lim_{t \to \mathcal{T} } \| \zeta_n(t)\|_{L^1(0, n)} \le  \| \zeta_n^{in} \|_{L^1(0, n)}  + \|     \mathcal{S}     \|_{L^1_{0, 1}  (\mathbb{R}_{>0} ) }  \mathcal{T} < \infty.
\end{align*}
Thus, it does not satisfy \eqref{norminfinity}. Consequently, $\mathcal{T}= \infty$.
 This completes the proof of Proposition \ref{Prop1}.
\end{proof}

Let us now state a crucial identity that may help to prove some important results in our analysis.
\begin{lemma}\label{LemmaIdentity}
For each $n > 1$, let $\zeta_n$ be a solution to \eqref{TSCEs}, \eqref{TInitial}. Then for $\omega \in L^{\infty}(\mathbb{R}_{>0})$, $\zeta_n$ satisfies the following identity
\begin{align}\label{TVWSolution}
&\int_0^n [ \zeta_n(p, t)- \zeta_n^{in}(p) ] \omega(p) \ dp\nonumber\\
=& \frac{1}{2} \int_0^t \int_0^n \int_0^{n} H_{\omega, n}(p, q)  \mathcal{A}_n(p, q) \zeta_n(s, p) \zeta_n(s, q) d q  d p ds\nonumber\\
& +\int_0^t\int_0^n \omega(p) \mathcal{S}_n(p) d p ds - \int_0^t\int_0^n \omega(p) \mathcal{R}_n(p) \zeta_n(s, p)  dp ds,
\end{align}
where
\begin{align}\label{HOmega}
 H_{\omega, n}(p, q)= \omega (p+q) \chi_{(0, n)}(p+q)- [\omega(p) + \omega(q)].
 \end{align}
\end{lemma}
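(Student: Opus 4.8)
The plan is to test the truncated equation \eqref{TSCEs} against $\omega$ and then integrate in time. First I would fix $\omega \in L^{\infty}(\mathbb{R}_{>0})$ and recall from Proposition \ref{Prop1} that $\zeta_n \in \mathcal{C}^1([0,\infty); L^1(0,n))$, so that the scalar map $t \mapsto \int_0^n \zeta_n(t,p)\omega(p)\, dp$ is continuously differentiable with derivative $\int_0^n \partial_t \zeta_n(t,p)\,\omega(p)\, dp$. Multiplying \eqref{TSCEs} by $\omega(p)$ and integrating over $(0,n)$ thus produces an identity whose right-hand side splits into the two coagulation contributions, the source contribution, and the removal contribution. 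Since $\|\mathcal{A}_n\|_{\infty} \le 4An^2$ by \eqref{bound for kernel}, $\omega$ is bounded, and $\zeta_n(t,\cdot) \in L^1(0,n)$ on a bounded domain, every double integral below converges absolutely, so Fubini's theorem applies freely.

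The core of the argument is the rearrangement of the two coagulation integrals. For the gain term $\int_0^n \omega(p)\,\mathcal{C}_a^n(\zeta_n)\, dp$ I would insert the definition of $\mathcal{C}_a^n$ and perform the change of variables $(p,q) \mapsto (p-q,q)$ on the inner integral; the region $\{0 < q < p < n\}$ is mapped onto the triangle $\{p>0,\ q>0,\ p+q<n\}$, which gives $\tfrac{1}{2}\iint_{\{p+q<n\}} \omega(p+q)\,\mathcal{A}_n(p,q)\,\zeta_n(s,p)\,\zeta_n(s,q)\, dp\, dq$. Because $p+q<n$ already forces $p,q<n$, this equals $\tfrac{1}{2}\int_0^n\int_0^n \omega(p+q)\chi_{(0,n)}(p+q)\,\mathcal{A}_n(p,q)\,\zeta_n(s,p)\,\zeta_n(s,q)\, dq\, dp$, which is precisely the first term of $H_{\omega,n}$ in \eqref{HOmega}. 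For the loss term, $\int_0^n \omega(p)\,\mathcal{C}_d^n(\zeta_n)\, dp = \int_0^n\int_0^n \omega(p)\,\mathcal{A}_n(p,q)\,\zeta_n(s,p)\,\zeta_n(s,q)\, dq\, dp$, I would symmetrize using Fubini together with the symmetry of $\mathcal{A}_n$ to replace $\omega(p)$ by $\tfrac{1}{2}[\omega(p)+\omega(q)]$, which matches the remaining terms of $H_{\omega,n}$.

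Combining gain minus loss then yields exactly $\tfrac{1}{2}\int_0^n\int_0^n H_{\omega,n}(p,q)\,\mathcal{A}_n(p,q)\,\zeta_n(s,p)\,\zeta_n(s,q)\, dq\, dp$ for the coagulation part. Finally I would integrate the resulting identity in time from $0$ to $t$; the fundamental theorem of calculus applied to the $\mathcal{C}^1$ scalar map turns the left-hand side into $\int_0^n [\zeta_n(t,p)-\zeta_n^{in}(p)]\,\omega(p)\, dp$, while the source and removal terms carry over unchanged, producing \eqref{TVWSolution}. The only genuinely delicate point is the domain bookkeeping in the gain term: the gain integral lives on the triangle $\{p+q<n\}$ whereas the loss integral lives on the full square $(0,n)^2$, and it is exactly this mismatch that is encoded by the cut-off $\chi_{(0,n)}(p+q)$ attached to $\omega(p+q)$ in \eqref{HOmega}. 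Care is needed to ensure this factor is neither dropped nor duplicated; everything else is routine.
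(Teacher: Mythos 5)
Your proof is correct, and it is precisely the standard argument the paper relies on: the paper states Lemma \ref{LemmaIdentity} without proof, treating it as the routine weak formulation of the truncated equation \eqref{TSCEs}. Your derivation supplies exactly the expected steps --- testing against $\omega$, the change of variables $(p,q)\mapsto(p-q,q)$ in the gain term, symmetrization of the loss term via Fubini and the symmetry of $\mathcal{A}_n$, and the fundamental theorem of calculus in time, all justified by the bound \eqref{bound for kernel} and $\zeta_n\in\mathcal{C}^1([0,\infty);L^1(0,n))$ --- including the correct bookkeeping of the cut-off $\chi_{(0,n)}(p+q)$ in \eqref{HOmega}.
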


\textbf{Step-II:}
In this step, our aim is to show that the family of solutions $\{ \zeta_n\}_{n > 1}$ is relatively compact in $\mathcal{C}([0, T]_w; L_{0, 1}^1(\mathbb{R}_{>0} ) )$ for linear coagulation kernel and $\{ \zeta_n\}_{n > 1}$ is relatively compact in $\mathcal{C}([0, T]_w; L^1(\mathbb{R}_{>0} ) )$ for product-type of kernels. For that purpose, we implement a weak $L^1$ compactness technique which is used in the pioneering work of Stewart \cite{Stewart:1989}. In the next lemma, we show the family of solutions $\{ \zeta_n\}_{n > 1}$ is uniformly bounded in $L_{0, 1}^1(\mathbb{R}_{>0})$.

\subsection{Uniform Bound}
\begin{lemma}\label{Uboundlemma}
 Let $T>0$.  Consider $\zeta^{in}$ satisfies \eqref{ConditionIntial} and assume that the coagulation rate $\mathcal{A}$ satisfies  \eqref{EstimateCoagRate}. Suppose $\mathcal{S}$ and $\mathcal{R}$ enjoy assumptions  \eqref{RateInjection} and \eqref{RateRemovable}, respectively. Then, we have
\begin{align*}
\int_0^{n} (1+ p)\ \zeta_n(t, p) dp \le \Lambda(T)\ \ \text{for all}\ t \in [0, T],
\end{align*}
where $\Lambda(T)$ is a positive constant depending on $T$.

\begin{align*}
(ii)\ \int_0^t \int_0^{n} \mathcal{R}_n(p)  \zeta_n(\tau, p) dp d\tau \le \|\zeta^{in} \|_{L^1_{0, 1}(\mathbb{R}_{>0})  } + \|\mathcal{S}\|_{L^1_{0, 1}(\mathbb{R}_{>0})} t, \ \text{for all}\ t \in [0, T].
\end{align*}

\end{lemma}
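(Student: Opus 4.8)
The plan is to obtain both estimates directly from the moment identity \eqref{PropMassbound} established in Proposition \ref{Prop1}, combined with the zeroth-moment bound \eqref{Truncated6}. First I would treat the $L^1_{0,1}$ bound, which amounts to controlling $\int_0^n (1+p)\zeta_n \,dp = \|\zeta_n(t)\|_{L^1(0,n)} + \int_0^n p\,\zeta_n\,dp$. The zeroth-moment part is already bounded linearly in $t$ by \eqref{Truncated6}, so the real work is the first moment. Starting from \eqref{PropMassbound}, the decisive observation is that in the coagulation contribution $\int_0^n\int_{n-p}^n p\,\mathcal{A}_n(p,q)\zeta_n(t,p)\zeta_n(t,q)\,dq\,dp$ the integration region forces $p+q>n$, hence $q>n-p$; using $p<p+q$ together with the growth bound \eqref{EstimateCoagRate} I can write $p\,\mathcal{A}_n(p,q)\le (p+q)\mathcal{A}_n(p,q)\le A(p+q)(1+p)(1+q)$. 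The goal is to dominate this by a constant multiple of $\big(\int_0^n(1+p)\zeta_n\,dp\big)^2$ so that the moment satisfies a closed (Gronwall-type) differential inequality; I expect this bookkeeping, keeping the kernel growth linear enough that the quadratic term in the first moment is controlled, to be the main obstacle.

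More precisely, after dropping the non-positive removal term $-\int_0^n p\,\mathcal{R}_n(p)\zeta_n\,dp\le 0$ and bounding the source term by $\int_0^\infty p\,\mathcal{S}(p)\,dp\le\|\mathcal{S}\|_{L^1_{0,1}(\mathbb{R}_{>0})}$, I arrive at an inequality of the form
\begin{align*}
\frac{d}{dt}\int_0^n p\,\zeta_n\,dp \le C\Big(\int_0^n(1+p)\zeta_n\,dp\Big)^2 + \|\mathcal{S}\|_{L^1_{0,1}(\mathbb{R}_{>0})}.
\end{align*}
Combining this with the zeroth-moment inequality obtained from \eqref{Truncated4} and adding, I get a single differential inequality for $Y_n(t):=\int_0^n(1+p)\zeta_n(t,p)\,dp$. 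Here I anticipate the subtlety that a purely quadratic right-hand side could yield only finite-time blow-up; the resolution should be that the exponential decay of $\mathcal{S}$ and the structure of the kernel (and, for the product case, the removal coefficient with $\alpha<1$ absorbing high moments) keep $Y_n$ bounded on $[0,T]$ uniformly in $n$, producing the constant $\Lambda(T)$. An application of a comparison/Gronwall argument then closes part $(i)$.

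For part $(ii)$, I would return to the zeroth-moment identity \eqref{Truncated4}. Reading it as an equality rather than discarding terms, I isolate the removal contribution: integrating \eqref{Truncated4} in time from $0$ to $t$ gives
\begin{align*}
\int_0^t\int_0^n\mathcal{R}_n(p)\zeta_n(\tau,p)\,dp\,d\tau
= \|\zeta_n^{in}\|_{L^1(0,n)} - \|\zeta_n(t)\|_{L^1(0,n)} - (\text{coagulation loss}) + \int_0^t\int_0^n\mathcal{S}_n\,dp\,d\tau.
\end{align*}
Since the coagulation-loss terms in \eqref{Truncated4} are manifestly non-positive and $\|\zeta_n(t)\|_{L^1(0,n)}\ge 0$, dropping them only increases the right-hand side, yielding the bound $\|\zeta_n^{in}\|_{L^1(0,n)}+\int_0^t\int_0^n\mathcal{S}_n\,dp\,d\tau$. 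Bounding the source via $\mathcal{S}_n\le\mathcal{S}$ and $\int_0^\infty\mathcal{S}\,dp\le\|\mathcal{S}\|_{L^1_{0,1}(\mathbb{R}_{>0})}$, together with $\|\zeta_n^{in}\|_{L^1(0,n)}\le\|\zeta^{in}\|_{L^1_{0,1}(\mathbb{R}_{>0})}$, gives exactly the stated estimate. This part is essentially immediate once part $(i)$ guarantees the integrals are finite; the only care needed is the sign analysis of the terms being discarded.
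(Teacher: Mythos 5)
Your part (ii) is correct and is essentially the paper's own argument: take the zeroth-moment identity (the paper uses \eqref{TVWSolution} with $\omega\equiv 1$; integrating \eqref{Truncated4} in time is the same thing), note that the coagulation contributions and $\|\zeta_n(t)\|_{L^1(0,n)}$ enter with favourable signs, drop them, and bound the source and initial data as you do.

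Part (i), however, has a genuine gap, and it comes from taking the sign of the truncation term in \eqref{PropMassbound} at face value. Re-deriving that identity (multiply \eqref{TSCEs} by $p$, apply Fubini's theorem and the substitution $p-q=p'$, and use the symmetry of $\mathcal{A}_n$) gives
\begin{align*}
\frac{d}{dt}\int_0^n p\,\zeta_n(t,p)\,dp = -\int_0^n\int_{n-p}^n p\,\mathcal{A}_n(p,q)\,\zeta_n(t,p)\,\zeta_n(t,q)\,dq\,dp + \int_0^n p\,\mathcal{S}_n(p)\,dp - \int_0^n p\,\mathcal{R}_n(p)\,\zeta_n(t,p)\,dp,
\end{align*}
i.e.\ the coagulation term is a \emph{loss} term: it is the mass flux out of $(0,n)$ caused by coalescence to sizes larger than $n$. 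The plus sign printed in \eqref{PropMassbound} is a typo; compare with the treatment of the analogous quantity $\mathcal{F}_n\le 0$ in \eqref{Fn} and with the bound \eqref{C(T1)}, both of which only make sense with the minus sign. Once the sign is correct, the term is simply discarded, the removal term is discarded as well, and the first moment obeys $\frac{d}{dt}\int_0^n p\,\zeta_n\,dp \le \|\mathcal{S}\|_{L^1_{0,1}(\mathbb{R}_{>0})}$, a linear-in-time bound requiring no Gronwall argument; that is the paper's proof. Your route instead treats the term as a positive contribution to be dominated, and this fails twice over. First, the domination you aim for is false: on $\{p+q>n\}$ you only get $p\,\mathcal{A}_n(p,q)\le A(p+q)(1+p)(1+q)$, and $(p+q)(1+p)(1+q)$ cannot be bounded by $C(1+p)(1+q)$ (for the Class-I kernel, $p\,\mathcal{A}(p,q)=Ap(p+q)$ is genuinely quadratic in $p$), so the resulting estimate involves second moments $\int (1+p)^2\zeta_n\,dp$ — which you do not control — rather than $\bigl(\int(1+p)\zeta_n\,dp\bigr)^2$. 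Second, even granting a closed inequality $Y_n'\le C\,Y_n^2 + \|\mathcal{S}\|_{L^1_{0,1}(\mathbb{R}_{>0})}$, comparison with the blowing-up solution of $y'=Cy^2$ only yields a bound on a time interval determined by $Y_n(0)$ and $C$, not on an arbitrary $[0,T]$; the rescue you hope for from the exponential decay of $\mathcal{S}$ cannot materialize, because the obstruction is the quadratic term, not the source. The missing idea is thus precisely the signed (dissipative) structure of the truncation term.
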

\begin{proof}
Consider $\omega \equiv 1$. From \eqref{HOmega}, we have $H_{\omega, n}  \le -1 $. Then, it can be inferred from \eqref{LemmaIdentity} and \eqref{TVWSolution} that
\begin{align}\label{bound1}
 \int_0^n   \{ \zeta_n(t, p)- \zeta_n^{in}(p) \} \ dp \le &  - \frac{1}{2} \int_0^t \int_0^n \int_0^n  \mathcal{A}_n(p, q) \zeta_n(s, p) \zeta_n(s, q) dq dp ds\nonumber\\
&    + \int_0^t\int_0^n \bigg( \mathcal{S}_n(p)  -  \mathcal{R}_n(p) \zeta_n(s, p) \bigg)  dp ds, 
\end{align}
On the one hand, using the non-negativity of $\zeta_n$, $\mathcal{A}_n$ and $\mathcal{R}_n$ and on the other hand, applying the integrability condition \eqref{RateInjection} into \eqref{bound1}, we estimate
\begin{align*}
 \int_0^n  \zeta_n(t, p)dp 
 \le & \int_0^n  \zeta_n^{in}(p)dp + \int_0^t \int_0^n \mathcal{S}_n(p) dp ds \\
\le & \|\zeta^{in} \|_{L^1_{0, 1}(\mathbb{R}_{>0})  } + \|\mathcal{S}\|_{L^1_{0, 1}(\mathbb{R}_{>0})} t \le \Lambda^{\ast}(T),
\end{align*}
where $\Lambda^{\ast}(T):= \|\zeta^{in} \|_{L^1_{0, 1}(\mathbb{R}_{>0})  }+ \| \mathcal{S} \|_{L^1_{0, 1}(\mathbb{R}_{>0})} T$. This proves the Lemma \ref{Uboundlemma} $(ii)$.
Finally, using \eqref{PropMassbound}, we evaluate
\begin{align*}
\int_0^n (1+p) \ \zeta_n(t, p)\ dp \le \Lambda(T),
\end{align*}
where $\Lambda(T):= \Lambda^{\ast}(T) + \int_0^{\infty} p \zeta^{in}(p) dp + \| \mathcal{S} \|_{L^1_{0, 1}(\mathbb{R}_{>0} ) }$, for each $n>1$.
This proves the Lemma \ref{Uboundlemma} $(i)$.
\end{proof}

We next discuss the behavior of truncated solution $\zeta_n$ for large size particle $p$ when the coagulation rate satisfies \eqref{SumK}.
\begin{lemma}\label{LargeLemma}
Fix $T>0$. Consider $\zeta^{in}$ satisfies \eqref{ConditionIntial} and assume that the coagulation rate $\mathcal{A}$ satisfies \eqref{SumK}. Suppose $\mathcal{S}$ and $\mathcal{R}$ enjoy assumptions  \eqref{RateInjection} and \eqref{RateRemovable}, respectively.  Then for every $n> 1$, we have
\begin{equation}\label{C(T)}
\sup_{t\in [0, T]}\int_0^n \sigma_1 (p) \zeta_n(t, p)\ dp  \le \Xi(T),
\end{equation}
\begin{align}\label{C(T1)}
\int_0^T \int_0^n\int_{n-p}^n & \sigma_1 (p)  \mathcal{A}_n(p, q) \ \zeta_n(s, p) \ \zeta_n(s, q) \ dq dp ds \le \Xi(T),
\end{align}
and
\begin{align}
\int_0^t \int_0^n \sigma_1(p) \mathcal{R}_n(p) \zeta_n(s, p)\  dp ds \le \Xi(T),
\end{align}
where $\Xi(T)$ (depending on $T$) is a positive constant and the $\sigma_1$ is a convex function and satisfies \eqref{Convexp1} and \eqref{Convexp2}.
\end{lemma}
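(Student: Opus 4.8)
The plan is to test the identity \eqref{TVWSolution} of Lemma \ref{LemmaIdentity} with the weight $\omega=\sigma_1\chi_{(0,n)}$ and to run a Grönwall argument on the functional $Y_n(t):=\int_0^n\sigma_1(p)\zeta_n(t,p)\,dp$. Since $\sigma_1$ is continuous and hence bounded on $[0,n]$, the weight $\sigma_1\chi_{(0,n)}$ belongs to $L^\infty(\mathbb{R}_{>0})$ and is therefore admissible in \eqref{TVWSolution}. With this choice the kernel factor \eqref{HOmega} reduces on $(0,n)^2$ to $H_{\sigma_1,n}(p,q)=\sigma_1(p+q)\chi_{(0,n)}(p+q)-\sigma_1(p)-\sigma_1(q)$, and the left-hand side of \eqref{TVWSolution} becomes $Y_n(t)-Y_n(0)$.

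First I would dissect $H_{\sigma_1,n}$ according to whether the cut-off is active. On the gain region $G_n:=\{(p,q)\in(0,n)^2:p+q<n\}$ the cut-off is inactive and $H_{\sigma_1,n}(p,q)=\sigma_1(p+q)-\sigma_1(p)-\sigma_1(q)\ge 0$ by the left-hand inequality in \eqref{Convexp6} (superadditivity of the convex $\sigma_1$ with $\sigma_1(0)=0$). On the complementary region $B_n:=\{(p,q)\in(0,n)^2:p+q\ge n\}=\{0<p<n,\ n-p<q<n\}$ one has $H_{\sigma_1,n}(p,q)=-\sigma_1(p)-\sigma_1(q)\le 0$, and by the symmetry of $\mathcal{A}_n\zeta_n\zeta_n$ its contribution to \eqref{TVWSolution} is exactly $-\int_0^t\int_0^n\int_{n-p}^n\sigma_1(p)\mathcal{A}_n(p,q)\zeta_n(s,p)\zeta_n(s,q)\,dq\,dp\,ds$, i.e. the negative of the boundary integral \eqref{C(T1)}. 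Rearranging \eqref{TVWSolution} therefore collects on the left three non-negative quantities, $Y_n(t)$, the boundary integral \eqref{C(T1)}, and the removal integral $\int_0^t\int_0^n\sigma_1\mathcal{R}_n\zeta_n$, while on the right sit $Y_n(0)$, the source integral, and the $G_n$-contribution of the coagulation term.

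The heart of the argument is the $G_n$-term $\frac12\int_0^t\int\!\!\int_{G_n}\big(\sigma_1(p+q)-\sigma_1(p)-\sigma_1(q)\big)\mathcal{A}_n\zeta_n\zeta_n$. Here I invoke \eqref{Convexp6} to replace the bracket by $2\frac{p\sigma_1(q)+q\sigma_1(p)}{p+q}$, enlarge $G_n$ to all of $(0,n)^2$ (the integrand is non-negative), and use symmetry to reduce the estimate to $2\int_0^n\int_0^n\frac{q}{p+q}\mathcal{A}_n(p,q)\sigma_1(p)\zeta_n(s,p)\zeta_n(s,q)\,dq\,dp$. The decisive estimate, special to the linear kernel \eqref{SumK}, is $\frac{q}{p+q}\mathcal{A}(p,q)\le A(1+q)$: when $p+q<1$ this reads $\frac{Aq}{p+q}\le A$, and when $p+q\ge 1$ it reads $Aq\le A(1+q)$. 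Consequently the $G_n$-term is dominated by $2A\big(\int_0^n(1+q)\zeta_n(s,q)\,dq\big)Y_n(s)\le 2A\Lambda(T)Y_n(s)$ after invoking Lemma \ref{Uboundlemma}. This is precisely what lets the loop close: the linear growth is absorbed by the already-controlled first moment, so no higher moment is needed.

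To finish, I drop the two non-negative left-hand terms (the boundary and the removal integrals), bound $Y_n(0)\le\Gamma_1$ by \eqref{Convexp2} and the source integral by $T\Gamma_3$ via \eqref{Convexp3}, obtaining $Y_n(t)\le\Gamma_1+T\Gamma_3+2A\Lambda(T)\int_0^t Y_n(s)\,ds$; Grönwall's lemma then gives $Y_n(t)\le(\Gamma_1+T\Gamma_3)e^{2A\Lambda(T)T}=:\Xi(T)$, which is \eqref{C(T)}. Reinserting this bound into the rearranged identity while now \emph{keeping} the two discarded non-negative terms bounds both the boundary coagulation integral \eqref{C(T1)} and the removal integral by the same $\Xi(T)$ (enlarged if necessary). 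The main obstacle is the careful bookkeeping of the cut-off $\chi_{(0,n)}(p+q)$ in \eqref{HOmega}—separating the superadditive gain on $G_n$ from the boundary loss on $B_n$—together with the sharp estimate $\frac{q}{p+q}\mathcal{A}(p,q)\le A(1+q)$ exploiting the linear growth; for a merely quadratic bound $A(1+p)(1+q)$ the same computation would instead produce $\int_0^n(1+p)\sigma_1(p)\zeta_n\,dp$, a moment that Lemma \ref{Uboundlemma} does not control, which is exactly why this lemma is restricted to the linear kernel \eqref{SumK}.
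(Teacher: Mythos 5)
Your proof is correct and follows essentially the same route as the paper: the same weight $\sigma_1\chi_{(0,n)}$ in the truncated identity \eqref{TVWSolution}, the same splitting of $H_{\sigma_1,n}$ into the gain region $p+q<n$ and the boundary region $p+q\ge n$ (whose contribution is exactly the negative of \eqref{C(T1)}), the bound \eqref{Convexp6}, and a Gr\"{o}nwall closure via the first-moment bound of Lemma \ref{Uboundlemma}, finally re-inserting the bound to control the boundary and removal integrals. The only difference is cosmetic: your unified inequality $\frac{q}{p+q}\mathcal{A}(p,q)\le A(1+q)$ for the kernel \eqref{SumK} replaces the paper's case-by-case estimates over the sub-regions $p+q<1$, $p<1<p+q$ and $p\ge 1$, yielding the slightly cleaner Gr\"{o}nwall constant $2A\Lambda(T)$ in place of $6A\Lambda(T)$ plus an additive term.
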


\begin{proof} We set $\omega (p) :=\sigma_1(p) \chi_{(0, n)}(p)$, and inserting it into \eqref{TVWSolution} to obtain
\begin{align}\label{Large1}
&\int_0^n \sigma_1(p) \zeta_n(t, p) dp \nonumber\\
=& \int_0^n \sigma_1(p) \zeta_n^{in}(p)dp + \frac{1}{2} \int_0^t \int_0^n\int_0^{n} H_{\sigma_1, n} (p, q)   \mathcal{A}_n(p, q) \zeta_n(s, p) \zeta_n(s, q) \ dq dp ds\nonumber\\
& + \int_0^t\int_0^n \sigma_1(p) \mathcal{S}_n(p) dp ds - \int_0^t \int_0^n \sigma_1(p) \mathcal{R}_n(p) \zeta_n(s, p)  dp ds,
\end{align}
where
\begin{align}\label{Large2}
 H_{\sigma_1, n} (p, q)= \sigma_1 (p+q)\chi_{(0, n)}(p+q)-[\sigma_1 (p)+\sigma_1 (q)].
 \end{align}
By using \eqref{Convexp2} and \eqref{Convexp3} into \eqref{Large1}, we have
\begin{align}\label{large3}
\int_0^n \sigma_1(p) \zeta_n(t, p) dp \le & \Gamma_1 + \Gamma_3 t + \frac{1}{2}\int_0^t [ \mathcal{E}_n(s) + \mathcal{F}_n(s)]ds  \nonumber\\
&- \int_0^t \int_0^n \mathcal{R}_n(p) \zeta_n(s, p)  dp ds,
\end{align}
where
\begin{align*}
\mathcal{E}_n(s)= \int_0^n \int_0^{n-p} H_{\sigma_1, n}(p, q)  \mathcal{A}_n(p, q) \zeta_n(s, p) \zeta_n(s, q) dq dp,
\end{align*}
and
\begin{align*}
\mathcal{F}_n(s)= \int_0^n  \int_{n-p}^n  H_{\sigma_1, n} (p, q)  \mathcal{A}_n(p, q) \zeta_n(s, p) \zeta_n(s, q) dq dp.
\end{align*}
Next, we estimate $\mathcal{E}_n(s)$, by using  \eqref{Convexp6} and \eqref{SumK}, as
\begin{align}\label{En}
\mathcal{E}_n(s) \le & 2 \int_0^n \int_0^{n} \mathcal{A}_n(p, q)  \frac{ p \sigma_1(q) + q \sigma_1(p)}{p+q}  \zeta_n(s, p) \zeta_n(s, q) dq dp \nonumber\\
\le & 4 A  \int_0^1 \int_0^{1-p}   \frac{p\sigma_1(q) }{p+q}\  \zeta_n(s, p) \zeta_n(s, q) dq dp  \nonumber\\
& + 2 A \int_{0}^1 \int_{1-p}^{n}   \{ p \sigma_1(q) + q \sigma_1(p) \}  \ \zeta_n(s, p) \zeta_n(s, q) dq dp \nonumber\\
& + 2 A \int_1^n \int_0^{n}  \{ p \sigma_1(q) + q \sigma_1(p) \} \   \zeta_n(s, p) \zeta_n(s, q) dq dp.
\end{align}
Let us estimate the first term on the right-hand side of \eqref{En}, by using the Lemma \ref{Uboundlemma} and the monotonicity of $\sigma_1$, as
\begin{align}\label{En1}
 4 A \int_0^1 \int_0^{1-p}    \sigma_1(q) \zeta_n(s, p) \zeta_n(s, q) dq dp
 \le 4 A \sigma_1(1) \Lambda^2(T).
\end{align}
Again, by using Lemma \ref{Uboundlemma} and the monotonicity of $\sigma_1$, the second term on the right-hand side of \eqref{En} can be evaluated, as
\begin{align}\label{En2}
 & 2 A \int_{0}^1 \int_{1-p}^{n}   \{ p \sigma_1(q) + q \sigma_1(p) \}  \ \zeta_n(s, p) \zeta_n(s, q) dq dp \nonumber\\
\le & 2 A  \int_0^1 \int_0^{n} \sigma_1(q)  \zeta_n(s, p) \zeta_n(s, q) dq dp  \nonumber\\
&+ 2 A \sigma_1(1) \int_0^1 \int_0^{n}  q \zeta_n(s, p) \zeta_n(s, q) dq dp  \nonumber\\
\le &   2 A \Lambda(T) \int_0^n   \sigma_1(p) \zeta_n(s, p) dp + 2 A \sigma_1(1) \Lambda^2(T).
\end{align}
Finally, we evaluate the last integral on the right-hand to \eqref{En}, by applying Lemma \ref{Uboundlemma}, as
\begin{align}\label{En3}
& 2 A \int_1^n \int_1^{n}  \{ p \sigma_1(q) + q \sigma_1(p) \} \ \zeta_n(s, p)\ \zeta_n(s, q) \ dq dp \nonumber\\
\le & 4 A \int_1^n \int_1^{n}  p \sigma_1(q) \zeta_n(s, p)\  \zeta_n(s, q)\ dq dp \le  4 A \Lambda(T) \int_0^n   \sigma_1(p) \zeta_n(s, p) dp.
\end{align}
Inserting \eqref{En1}, \eqref{En2} and \eqref{En3} into \eqref{En}, we obtain
\begin{align}\label{Enf}
\mathcal{E}_n(s) \le & 6 A \sigma_1(1) \Lambda^2(T) + 6 A \Lambda(T) \int_0^n   \sigma_1(p) \zeta_n(s, p) dp.
\end{align}
If $p + q > n$, then from \eqref{Large2}, we entail that 
\begin{align}\label{HValue}
H_{\sigma_1}(p, q)=-\sigma_1(p)-\sigma_1(q).
\end{align}
Using \eqref{HValue}, $\mathcal{F}_n(s)$ can be rewritten as
\begin{align}\label{Fn}
\mathcal{F}_n(s)= - 2  \int_0^n  \int_{n-p}^{n} \sigma_1(p)  \mathcal{A}_n(p, q)\ \zeta_n(s, p) \zeta_n(s, q)\ dq dp \le  0.
\end{align}
Since $\zeta_n \ge 0$ and $\mathcal{R}_n \ge 0$, then 
\begin{align}\label{REstimate}
- \int_0^t \int_0^n \sigma_1(p) \mathcal{R}_n(p)\ \zeta_n(s, p)  dp ds \le 0.
\end{align}
Inserting (\ref{Enf}), (\ref{Fn}) and \eqref{REstimate} into (\ref{Large1}) and introducing
\begin{align*}
  \mathcal{V}_n(t) :=  & \int_0^n \sigma_1(p) \zeta_n(t, p)dp   + \int_0^t \int_0^n \sigma_1(p) \mathcal{R}_n(p) \zeta_n(s, p)  dp ds  \nonumber\\
 &+   \int_0^t \int_0^n\int_{n-p}^{n} \sigma_1(p)  \mathcal{A}_n (p, q)\ \zeta_n(s, p) \zeta_n(s, q) dq dp ds,
\end{align*}
we end up with
\begin{align*}
  \mathcal{V}_n(t)  \le  \Gamma_1 +  \Gamma_3 t + 6 A \sigma_1(1) \Lambda^2(T) + 6 A \Lambda(T) \int_0^n   \sigma_1(p) \zeta_n(s, p) dp.
\end{align*}
Finally, by applying the Gronwall inequality, it can infer that
\begin{align*}
  \mathcal{V}_n(t) \le  \Xi(T),
\end{align*}
where $\Xi(T) :=\bigg(\Gamma_1+ \Gamma_3 T + \frac{\Gamma_3}{6A \Gamma(T) } + 6 A \sigma_1(1) \Lambda^2(T)  \bigg) e^{ 6 A \Lambda(T) T} $,
which completes the proof of Lemma \ref{LargeLemma}.
\end{proof}


The next lemma provides the equi-integrability for the family of solutions $\{ \zeta_n\}_{n > 1}$.
\subsection{Equi-integrability}
\begin{lemma}\label{LemmaUniformIntegrability}
 Fix $T>0$. Consider $\zeta^{in}$ satisfies \eqref{ConditionIntial} and assume that the coagulation rate $\mathcal{A}$ satisfies \eqref{EstimateCoagRate}. Suppose $\mathcal{S}$ and $\mathcal{R}$ enjoy assumptions  \eqref{RateInjection} and \eqref{RateRemovable}, respectively. Then, for $ \lambda >1$, there is a positive constant $C(T, \lambda)$ such that
\begin{align*}
 \sup_{t\in [0,T]}\int_0^{\lambda} \sigma_2(  \zeta_n(t, p))dp \le C(T, \lambda),
\end{align*}
where $\sigma_2 $ is a convex function and  $\sigma'_2$ is concave and satisfies \eqref{Convexp1} and \eqref{Convexp2}.
\end{lemma}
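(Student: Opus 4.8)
The plan is to derive a differential inequality for the functional $y_n(t) := \int_0^\lambda \sigma_2(\zeta_n(t,p))\, dp$ and then close it with Gronwall's lemma; since the compactness argument only requires large truncation parameters, I restrict throughout to $n > \lambda$. Because $\zeta_n$ is a classical solution of \eqref{TSCEs} and $\sigma_2 \in \mathcal{C}^1$, I first differentiate under the integral and use the chain rule to write $y_n'(t) = \int_0^\lambda \sigma_2'(\zeta_n(t,p))\,\partial_t\zeta_n(t,p)\, dp$, and then substitute the four terms of \eqref{TSCEs}. Two of these are immediately harmless: the coagulation-death contribution $-\int_0^\lambda \sigma_2'(\zeta_n)\,\mathcal{C}_d^n(\zeta_n)\, dp$ and the removal contribution $-\int_0^\lambda \sigma_2'(\zeta_n)\,\mathcal{R}_n \zeta_n\, dp$ are both nonpositive, since $\sigma_2' \ge 0$ and all remaining factors are nonnegative, so I simply discard them.

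For the source contribution I invoke the convexity inequality \eqref{Convexp5} with $p_1 = \mathcal{S}_n(p)$ and $p_2 = \zeta_n(t,p)$, giving $\mathcal{S}_n(p)\sigma_2'(\zeta_n(t,p)) \le \sigma_2(\mathcal{S}_n(p)) + \sigma_2(\zeta_n(t,p))$. As $\mathcal{S}_n \le \mathcal{S}$ and $\sigma_2$ is nondecreasing, integrating over $(0,\lambda)$ bounds this term by $\Gamma_4 + y_n(t)$.

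The genuine difficulty is the coagulation-gain term $\tfrac12 \int_0^\lambda \int_0^p \sigma_2'(\zeta_n(t,p))\,\mathcal{A}_n(p-q,q)\,\zeta_n(t,p-q)\,\zeta_n(t,q)\, dq\, dp$. The crucial observation is that for $p \in (0,\lambda)$ both arguments $p-q$ and $q$ lie in $(0,\lambda)$, so \eqref{EstimateCoagRate} yields the uniform kernel bound $\mathcal{A}_n(p-q,q) \le A(1+\lambda)^2$. I then split the triple product by applying \eqref{Convexp5} to the pair $\sigma_2'(\zeta_n(t,p))\,\zeta_n(t,p-q) \le \sigma_2(\zeta_n(t,p)) + \sigma_2(\zeta_n(t,p-q))$, which leaves a free factor $\zeta_n(t,q)$. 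The first resulting piece, $\tfrac12 A(1+\lambda)^2 \int_0^\lambda \sigma_2(\zeta_n(t,p)) \big(\int_0^p \zeta_n(t,q)\, dq\big)\, dp$, is controlled by Lemma \ref{Uboundlemma}$(i)$, since $\int_0^p \zeta_n(t,q)\, dq \le \Lambda(T)$; for the second piece I apply Fubini and the substitution $p \mapsto p-q$, after which the inner integral $\int_0^{\lambda-q}\sigma_2(\zeta_n(t,p'))\, dp'$ is again bounded by $y_n(t)$ and the outer $q$-integral of $\zeta_n(t,q)$ by $\Lambda(T)$. Together these give the bound $A(1+\lambda)^2 \Lambda(T)\, y_n(t)$ for the gain term.

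Collecting the estimates yields $y_n'(t) \le \big(A(1+\lambda)^2 \Lambda(T) + 1\big)\, y_n(t) + \Gamma_4$ with $y_n(0) \le \Gamma_2$, and Gronwall's lemma then produces the claimed $n$-independent constant $C(T,\lambda)$. As indicated, the main obstacle is the careful handling of the gain term: one must keep both arguments inside $(0,\lambda)$ to secure a uniform kernel bound, choose the pairing in \eqref{Convexp5} so that exactly one $\zeta_n$ factor survives to absorb the uniform mass bound $\Lambda(T)$, and apply Fubini correctly in the shifted variable. A minor technical point to verify is the legitimacy of differentiating $y_n$ under the integral, which follows from the $\mathcal{C}^1$ regularity of $\zeta_n$ in $L^1(0,n)$ established in Proposition \ref{Prop1}.
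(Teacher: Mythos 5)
Your proof is correct and follows essentially the same route as the paper's: both discard the nonpositive death and removal contributions, handle the source and coagulation-gain terms via the convexity inequality \eqref{Convexp5} together with the uniform kernel bound $A(1+\lambda)^2$ and the mass bound $\Lambda(T)$ from Lemma \ref{Uboundlemma}, and close with Gronwall's lemma. The only difference---you apply the kernel bound and \eqref{Convexp5} before performing the change of variables $p-q=p'$, whereas the paper changes variables first---is purely cosmetic.
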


\begin{proof} Let $ \lambda >1 $. Then from \eqref{TSCEs}, and the non-negativity of $\zeta_n$, $\mathcal{A}_n$ and $\mathcal{R}_n$,  it gives
\begin{align}\label{Equintp2}
 \frac{d}{dt}\int_0^{\lambda} \sigma_2( \zeta_n(t, p)) dp 
 \le & \frac{1}{2} \int_0^{\lambda} \int_0^p \sigma_2'( \zeta_n(t, p))  \mathcal{A}_n(p-q, q) \zeta_n(t, p-q) \zeta_n(t, q) dq dp\nonumber\\
&+\int_0^{\lambda}  \sigma_2'(\zeta_n(t, p)) \mathcal{S}_n(p) dp.
\end{align}
Use of Fubini's theorem, substitute the transformation $p-q=p'$ \&  $q=q'$, and \eqref{Convexp3} into \eqref{Equintp2}, we have
\begin{align}\label{Equintp3}
 \frac{d}{dt}\int_0^{\lambda} \sigma_2(\zeta_n(t, p)) dp 
 \le &\frac{1}{2} \int_0^{\lambda} \int_0^{\lambda-q} \sigma_2'( \zeta_n(t, p+q))  \mathcal{A}_n(p, q) \zeta_n(t, p) \zeta_n(t, q) dp dq\nonumber\\
&+\int_0^{\lambda}   [ \sigma_2(\zeta_n(t, p)) + \sigma_2 (\mathcal{S}_n(p))  ] dp \nonumber\\
\le &\frac{1}{2} \int_0^{\lambda} \int_0^{\lambda-q} \sigma_2'( \zeta_n(t, p+q))  \mathcal{A}_n(p, q) \zeta_n(t, p) \zeta_n(t, q) dp dq\nonumber\\
&+\int_0^{\lambda}    \sigma_2(\zeta_n(t, p))   dp + \Gamma_4.
\end{align}
We next evaluate the following term  by applying \eqref{EstimateCoagRate}, \eqref{Convexp4} and Lemma \ref{Uboundlemma}, as 
\begin{align}\label{est1}
& \frac{1}{2} \int_0^{\lambda}  \int_0^{\lambda-q}  \sigma_2'(\zeta_n(t, p+q))   \mathcal{A}_n (p, q) \zeta_n(t, p) \zeta_n(t, q) dp dq\nonumber\\
\le & \frac{1}{2} A (1+\lambda)^2 \int_0^{\lambda} \int_0^{\lambda-q}    \sigma_2'(\zeta_n(t, p+q)) \zeta_n(t, p) \zeta_n(t, q) dp dq\nonumber\\
\le & \frac{1}{2} A (1+\lambda)^2  \int_0^{\lambda} \int_0^{\lambda-q}   [\sigma_2(\zeta_n(t, p+q))+\sigma_2 (\zeta_n(t, q))] \zeta_n(t, p) dp dq\nonumber\\
\le &  C_1 (T, \lambda) \int_0^{\lambda} \sigma_2 (\zeta_n(t, p))dp,
\end{align}
where $C_1(T, \lambda ) := A (1+ \lambda )^2 \Lambda(T)$. 
By using  \eqref{est1} into \eqref{Equintp3}, we get
\begin{align}
\frac{d}{dt}\int_0^{\lambda} \sigma_2(\zeta_n(t, p))dp\le  C_2(T, \lambda )\int_0^{\lambda} \sigma_2(\zeta_n(t, p))dp+ \Gamma_4,
\end{align}
where $C_2(T, \lambda ):= C_1(T, \lambda)+ 1$.
Finally, the Gronwall inequality gives
\begin{align}
\int_0^{\lambda} \sigma_2(\zeta_n(t, p)) dp \le C(T, {\lambda}),
\end{align}
where $C(T, {\lambda})$ is a constant depending on $T$ and $ \lambda $. This completes the proof of the Lemma \ref{LemmaUniformIntegrability}.
\end{proof}

\subsection{Time Equi-continuity}
\begin{lemma}\label{Equicontinuityweaksense}
 Fix $T>0$. Consider $\zeta^{in}$ satisfies \eqref{ConditionIntial} and assume that the coagulation rate $\mathcal{A}$ satisfies the sum kernel stated in \eqref{EstimateCoagRate}. Suppose $\mathcal{S}$ and $\mathcal{R}$ enjoy assumptions  \eqref{RateInjection} and \eqref{RateRemovable}, respectively. For any $ \lambda  >1$, there is a positive constant $C_5(T, \lambda)$ depending on $T$ and $\lambda$ such that
\begin{align*}
\int_0^{\lambda}   | \zeta_n(t, p)- \zeta_n(s, p) | dp  \le C_5(T, \lambda)(t-s),
\end{align*}
for every $n > 1$ and $0 \le s \le t \le T$.
\end{lemma}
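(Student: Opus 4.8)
The plan is to control the time increment of $\zeta_n$ directly from the evolution equation \eqref{TSCEs}. Integrating \eqref{TSCEs} in time between $s$ and $t$ gives, for a.e. $p \in (0,n)$,
\begin{align*}
\zeta_n(t,p)-\zeta_n(s,p)=\int_s^t\Big[\mathcal{C}_a^n(\zeta_n(\tau,p))-\mathcal{C}_d^n(\zeta_n(\tau,p))+\mathcal{S}_n(p)-\mathcal{R}_n(p)\zeta_n(\tau,p)\Big]d\tau .
\end{align*}
Taking absolute values, integrating over $(0,\lambda)$, and using the triangle inequality together with Tonelli's theorem, I would reduce the estimate to four separate contributions: the gain term, the loss term, the source term, and the removal term, each integrated in $p$ over $(0,\lambda)$ and in $\tau$ over $(s,t)$. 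The goal is then to bound each $p$-integral by a constant depending only on $T$ and $\lambda$, \emph{uniformly in} $n$, after which the $\tau$-integration supplies the factor $(t-s)$.

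For the gain term I would apply Fubini's theorem and the change of variables $p-q=p'$, $q=q'$ to rewrite $\int_0^\lambda \mathcal{C}_a^n(\zeta_n(\tau,p))\,dp$ as a double integral over $\{(p,q): 0<q<\lambda,\ 0<p<\lambda-q\}$; on this set $(1+p)(1+q)\le(1+\lambda)^2$, so \eqref{EstimateCoagRate} and Lemma \ref{Uboundlemma} yield the bound $\tfrac12 A(1+\lambda)^2\Lambda(T)^2$. The source term is immediately controlled by $\|\mathcal{S}\|_{L^1_{0,1}(\mathbb{R}_{>0})}$ via \eqref{Sterm} and \eqref{RateInjection}, while the removal term is handled using \eqref{RateRemovable}: since $\alpha\in[0,1)$ we have $\mathcal{R}(p)\le k(1+p)^{\alpha}\le k(1+p)$, so $\int_0^\lambda \mathcal{R}_n(p)\zeta_n(\tau,p)\,dp\le k\int_0^n(1+p)\zeta_n(\tau,p)\,dp\le k\Lambda(T)$ again by Lemma \ref{Uboundlemma}.

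The main obstacle is the loss term $\mathcal{C}_d^n$, because its inner integral runs over $(0,n)$ rather than over the truncated window $(0,\lambda)$, so the plain $L^1(0,\lambda)$ bound does not suffice; one genuinely needs the full first-moment control out to size $n$. The key observation is that the factorized kernel estimate \eqref{EstimateCoagRate} decouples the two integrations,
\begin{align*}
\int_0^\lambda \mathcal{C}_d^n(\zeta_n(\tau,p))\,dp\le A\Big(\int_0^\lambda(1+p)\zeta_n(\tau,p)\,dp\Big)\Big(\int_0^n(1+q)\zeta_n(\tau,q)\,dq\Big)\le A\,\Lambda(T)^2,
\end{align*}
where both factors are bounded by the uniform first-moment estimate of Lemma \ref{Uboundlemma}. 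Collecting the four estimates, each $\tau$-integrand is dominated by the $n$-independent constant
\begin{align*}
C_5(T,\lambda):=\tfrac12 A(1+\lambda)^2\Lambda(T)^2+A\Lambda(T)^2+\|\mathcal{S}\|_{L^1_{0,1}(\mathbb{R}_{>0})}+k\Lambda(T),
\end{align*}
and integrating in $\tau$ over $(s,t)$ produces the desired factor $(t-s)$, which completes the proof.
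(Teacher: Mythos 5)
Your proof is correct, and its skeleton coincides with the paper's: both integrate \eqref{TSCEs} in time, split the increment on $(0,\lambda)$ into the gain, loss, source and removal contributions, and bound each one uniformly in $n$ and $\tau$ via the moment estimate of Lemma \ref{Uboundlemma}, so that the $\tau$-integration supplies the factor $(t-s)$. You deviate in two of the four terms. For the gain term the paper does not redo the change of variables with a kernel bound; it simply observes (see \eqref{Equicontinuity2}) that the gain integrated over $(0,\lambda)$ is at most one half of the loss integrated over $(0,\lambda)$, which avoids the $(1+\lambda)^2$ factor and yields the slightly cleaner constant $\tfrac{3}{2}A\Lambda^2(T)$; your direct bound is equally valid, only marginally cruder. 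For the removal term the paper invokes Lemma \ref{Uboundlemma}$(ii)$, whereas you use the pointwise growth bound \eqref{RateRemovable} together with the first-moment estimate to get $\int_0^\lambda \mathcal{R}_n(p)\zeta_n(\tau,p)\,dp \le k\Lambda(T)$ for every $\tau$. Your route is in fact the more solid one here: Lemma \ref{Uboundlemma}$(ii)$ only controls $\int_0^t\int_0^n \mathcal{R}_n\zeta_n\,dp\,d\tau$ by $\|\zeta^{in}\|_{L^1_{0,1}(\mathbb{R}_{>0})} + \|\mathcal{S}\|_{L^1_{0,1}(\mathbb{R}_{>0})}\,t$, which is not of the form $C(t-s)$, so the paper's step \eqref{Equicontinuity5} is loose as written, while your pointwise-in-time bound (licensed by the lemma's hypothesis that $\mathcal{R}$ satisfies \eqref{RateRemovable}) delivers exactly the Lipschitz-in-time estimate that is needed.
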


\begin{proof}
Let $T>0$ and $\lambda >1$. For $0 \le s \le t \le T$, we evaluate the following integral as
\begin{align}\label{Equicontinuity1}
 \int_0^{\lambda}    |\zeta_n(t, p)- \zeta_n(s, p)| dp 
 \le &  \int_s^t \int_0^{\lambda}  \bigg|  \frac{\partial \zeta_n}{\partial t}(\tau, p) \bigg| dp d\tau \nonumber\\
 \le  &  \int_s^t  \bigg[   \int_0^{\lambda}  \mathcal{C}_a^n(\zeta_n (\tau, p) ) dp  + \int_0^{\lambda}   \mathcal{C}_d^n(\zeta_n (\tau, p) ) dp  \nonumber\\
  &+ \int_0^{\lambda}   \mathcal{S}_n(p)  dp +\int_0^{\lambda}    \mathcal{R}_n(p) \zeta_n(\tau, p) dp  \bigg] d\tau.
\end{align}
An application of the Fubini theorem, we estimate
\begin{align}\label{Equicontinuity2}
  \int_0^{\lambda}  \mathcal{C}_a^n(\zeta_n (\tau, p) ) dp  \le  \frac{1}{2} \int_0^{\lambda}   \mathcal{C}_d^n(\zeta_n (\tau, p) ) dp.
\end{align}
Let us now evaluate the following integral, by using Fubini's theorem, \eqref{EstimateCoagRate} and Lemma \ref{Uboundlemma}, as
\begin{align}\label{Equicontinuity3}
 \int_s^t \int_0^{\lambda}     \mathcal{C}_d^n(\zeta_n (\tau, p) ) dp d\tau 
\le &  A  \int_s^t \int_0^{\lambda}  \int_{0}^{n}  (1+p) (1+q)  \zeta_n(\tau, p) \zeta_n(\tau, q) dq dp d\tau \nonumber\\
\le & A \Lambda^2(T)  (t-s).
\end{align}
Since $\mathcal{S} \in L^1_{0, 1} (\mathbb{R}_{>0})$, then the third integral can be estimated as
\begin{align}\label{Equicontinuity4}
  \int_s^t \int_0^{\lambda}  \mathcal{S}_n(p)  dp  d\tau 
   \le  \| \mathcal{S} \|_{L^1_{0, 1} (\mathbb{R}_{>0}) } (t-s).
\end{align}
Finally, the last term can be evaluated, by applying Lemma \ref{Uboundlemma} $(ii)$ as
\begin{align}\label{Equicontinuity5}
 \int_s^t \int_0^{\lambda}   \mathcal{R}_n(p) \zeta_n (\tau, p) dp d\tau
  \le  \|\mathcal{S}\|_{L^1_{0, 1}(\mathbb{R}_{>0})}  (t-s).
\end{align}

Using \eqref{Equicontinuity2}, \eqref{Equicontinuity3}, \eqref{Equicontinuity4} and \eqref{Equicontinuity5} into \eqref{Equicontinuity1}, we get
\begin{align}\label{Equicontinuity6}
\int_0^{\lambda}    | \zeta_n(t, p) - \zeta_n(s, p)| dp \le  C_5(T) (t-s),
\end{align}
where
\begin{align*}
C_5(T, \lambda) : = \bigg[ \frac{3}{2} A  \Lambda^2(T) + 2 \| \mathcal{S} \|_{L^1_{0, 1} (\mathbb{R}_{>0}) }   \bigg].
\end{align*}
This completes the proof of the Lemma \ref{Equicontinuityweaksense}.
\end{proof}

From the Lemma \ref{Equicontinuityweaksense}, we ensure that the family $\{ \zeta_n \}_{n>1}$ is strongly continuous in $L^1(0, \lambda)$. Since we know that strongly continuous implies weakly continuous. Thus,  the family $\{ \zeta_n \}_{n>1}$ is also weakly continuous, i.e., $\{ \zeta_n \}_{n>1} \subset \mathcal{C}([0, T]_w: L^1(0, \lambda))$,
more precisely, for every $\omega \in L^{\infty }(\mathbb{R}_{>0})$, we have
\begin{align}\label{weakcontinuity}
\int_0^{\lambda}  \omega(p) | \zeta_n(t, p)- \zeta_n(s, p) | dp  \le \| \omega \|_{L^{\infty }(\mathbb{R}_{>0})}C_5(T, \lambda)(t-s).
\end{align}

We are now in a position to complete the proof of Theorem \ref{TheoremSCEs} in the next subsection.

\subsection{Convergence of integrals}
\begin{proof} \emph{of Theorem} \ref{TheoremSCEs} (i):
From the De la Vall\`{e}e Poussin theorem \ref{TheoremDelaVallee}, Lemma \ref{LemmaUniformIntegrability}, \eqref{weakcontinuity} and then using the Dunford-Pettis theorem and a variant of the Arzel\`{a}-Ascoli theorem, see \cite{Vrabie:1995}, we conclude that $(\zeta_n)_{n>1}$ is relatively compact in $\mathcal{C}([0, T]_w; L^1(0, \lambda))$ for each $T>0$. Thus, there exists a subsequence of $(\zeta_n)_{n>1}$ (not relabeled) and a non-negative function $\zeta \in \mathcal{C}([0, T]_w; L^1(0, \lambda))$ such that
\begin{align}\label{weakconvergence}
 \zeta_n \to \zeta \ \ \ \text{in} \ \ \mathcal{C}([0, T]_w: L^1(0, \lambda)).
\end{align}
Then, proceeding the same steps as in \cite{Barik:2021}, we can improve the convergence \eqref{weakconvergence} to 
\begin{align}\label{weakconvergence1}
 \zeta_n \to \zeta \ \ \ \text{in} \ \ \mathcal{C}([0, T]_w: L_{0,1}^1(\mathbb{R}_{>0}) )\ \ \text{for each $T>0$},
\end{align}
by applying Lemma \ref{LargeLemma} and \eqref{weakconvergence}.  \\
  In order to complete the proof of Theorem \ref{TheoremSCEs} (i), we need to show that $\zeta$ is actually a solution to \eqref{SCEs}--\eqref{Initialdata} in a weak sense. Thus, it remains to verify all the truncated integrals in \eqref{TSCEs} converge weakly to the original integrals in \eqref{SCEs}, respectively.\\
  
 Suppose $\omega \in L^{\infty} (\mathbb{R}_{>0})$ with compact support included in $\mathbb{R}_{>0}$ for some $ \lambda>1$. By \eqref{TVWSolution}, we have
\begin{align}\label{Barik31}
\int_0^n \omega(p) [\zeta_n(t, p)- \zeta_n^{in}(p )] dp  = \mathcal{J}_1^n(t) + \mathcal{J}_2^ (t)+ \mathcal{J}_3^n(t)- \mathcal{J}_4^n(t)- \mathcal{J}_5^n(t),
\end{align}
where
\begin{align*}
\mathcal{J}_1^n (t) := &  \int_0^t \int_{0}^{\lambda} \mathcal{C}_a^n(\zeta_n (s, p) ) dp ds, \  
\mathcal{J}_2^n (t) :=   \int_0^t \int_{0}^{\lambda} \mathcal{C}_d^n(\zeta_n (s, p) ) dp ds, \\
\mathcal{J}_3^n (t) := & \int_0^t  \int_{0}^{n} \omega(p) \mathcal{S}_n(p)   dp ds, \ 
\mathcal{J}_4^n (t) :=  \int_0^t  \int_0^{\lambda}  \omega(p) \mathcal{R}_n(p)  \zeta_n(s, p) dp ds \\
\mathcal{J}_5^n (t) := & \int_0^t  \int_{\lambda}^n   \omega(p) \mathcal{R}_n(p)  \zeta_n(s, p) dp ds.
\end{align*}
 This is now a standard way to prove this convergence of integrals for the Smoluchowski coagulation equation by applying Lemma \ref{Uboundlemma}, Lemma \ref{LargeLemma} and \eqref{Convexp1}, see \cite{Barik:2018, Giri:2012, Laurencot:2002, Laurencot:2015, Stewart:1989}. We thus omit the proof of the convergence of integrals for the $\mathcal{C}_a^n(\zeta_n (t, p) )$ and $\mathcal{C}_d^n(\zeta_n (t, p) ) $.
  \\
The convergence of  $\mathcal{J}_3^n(t)$ can be easily shown by using the convergence of $\mathcal{S}_n$ to $\mathcal{S}$ and the Lebesgue dominated convergence theorem.\\

From \eqref{RateInjection} and $\omega \in L^{\infty} (\mathbb{R}_{>0})$, we find
\begin{align*}
\omega(p) \mathcal{R}_n(p)   \le  \| \omega\|_{L^{\infty}(0, \lambda) } k (1+ \lambda) \in L^{\infty}(0, \lambda).
\end{align*}
Since $\zeta_n \to \zeta \in \mathcal{C}([0,T]_w; L^1( \mathbb{R}_{>0} ))$ as $n \to \infty$, and $\omega(p) \mathcal{R}_n(p) \in L^{\infty}(0, \lambda) $, then by \cite{Laurencot:2002} Lemma 4.3, we have
\begin{align}\label{J41}
\lim_{n \to \infty}\mathcal{J}_4^n (t) 
= \int_0^t  \int_0^{\lambda}  \omega(p) \mathcal{R}(p)  \zeta(s, p) dp ds.
\end{align}

Now, $\mathcal{J}_5^n(t)$ can be evaluated, by applying \eqref{RateInjection}, Lemma \ref{LargeLemma} and \eqref{Convexp2}, as 
\begin{align*}
|\mathcal{J}_5^n (t)| 
\le & \| \omega\|_{L^{\infty}(\mathbb{R}_{>0})  }   \bigg| \int_0^t   \int_{\lambda}^n  \mathcal{R}_n(p)  \zeta_n(s, p) dp ds\bigg| \nonumber\\
\le & \| \omega\|_{L^{\infty} (\mathbb{R}_{>0}) }    \frac{\lambda} {\sigma_1(\lambda)} \bigg| \int_0^t  \int_{\lambda}^n   \sigma_1(p)  \mathcal{R}_n(p)   \zeta_n(s, p) dp ds    \bigg|\nonumber\\
\le & \| \omega\|_{L^{\infty}  (\mathbb{R}_{>0}) }   \frac{\lambda} {\sigma_1(\lambda)} \Xi(T).
\end{align*}

As limit $\lambda  \to \infty$, then 
\begin{align}\label{J51}
|\mathcal{J}_5^n (t)|   \to 0.
\end{align}

Similarly, we can show that the following term goes to $0$ as $\lambda \to \infty$
\begin{align}\label{J52}
 \bigg|\int_0^t  \int_{\lambda}^{\infty}   \omega(p) \mathcal{R}(p)  \zeta(s, p) dp ds \bigg| 
\le & \| \omega\|_{L^{\infty}(\mathbb{R}_{>0})  }   \bigg| \int_0^t    \int_{\lambda}^{\infty}  \mathcal{R}(p)  \zeta(s, p) dp ds\bigg| \nonumber\\
\le & \| \omega\|_{L^{\infty} (\mathbb{R}_{>0}) }    \frac{\lambda} {\sigma_1(\lambda)} \bigg| \int_0^t  \int_{\lambda}^{\infty} \sigma_1(p) \mathcal{R}(p)     \zeta(s, p) dp ds    \bigg|\nonumber\\
\le & \| \omega\|_{L^{\infty}  (\mathbb{R}_{>0}) }    \frac{\lambda} {\sigma_1(\lambda)} \Xi(T).
\end{align}
Finally, combining \eqref{J41}, \eqref{J51}, \eqref{J52} and taking first limit $\lambda \to \infty$ and then $n \to \infty$, we end up with
\begin{align}\label{last}                                                        
& \lim_{n\to \infty} \bigg\{
 \frac{1}{2} \int_0^t \int_0^n\int_0^{n} H_{\omega, n}(p, q)   \mathcal{A}_n(p, q)  \zeta_n(s, p) \zeta_n(s, q) dq dp ds\nonumber\\ 
 &+ \int_0^t\int_0^n  [ \mathcal{S}_n(p)  - \mathcal{R}_n(p) \zeta_n(s, p) ] dp ds \bigg\}\nonumber\\
=&  \frac{1}{2}\int_0^t \int_0^{\infty} \int_{0}^{\infty}\tilde{\omega}(p, q) \mathcal{A}(p, q)  \zeta(s, p) \zeta(s, q) dq dp ds\nonumber\\
& + \int_0^t \int_0^{\infty}  [ \mathcal{S}(p)  - \mathcal{R}(p) \zeta(s, p) ] dp ds.
\end{align}

Using once more the weak convergence $\zeta_n \longrightarrow \zeta ~\mbox{in} ~\mathcal{C}_w([0,T]; L^1(\mathbb{R}_{>0} ))$ into \eqref{Barik31} and applying \eqref{last}, we conclude
\begin{align*}
\int_0^{\infty} \omega(p) & [ \zeta(t, p)- \zeta^{in}(p) ] dp = \lim_{n \to \infty} \int_0^n \omega(p) [ \zeta_n(t, p)- \zeta_n^{in}(p)] dp \nonumber\\
 = & \lim_{n\to \infty} \bigg\{
 \frac{1}{2} \int_0^t \int_0^n\int_0^{n} H_{\omega, n}(p, q)   \mathcal{A}_n(p, q)  \zeta_n(s, p) \zeta_n(s, q) dq dp ds\nonumber\\ 
 &+ \int_0^t\int_0^n  [ \mathcal{S}_n(p)  - \mathcal{R}_n(p) \zeta_n(s, p) ] dp ds \bigg\}\nonumber\\
=&  \frac{1}{2}\int_0^t \int_0^{\infty} \int_{0}^{\infty}\tilde{\omega}(p, q) \mathcal{A}(p, q)  \zeta(s, p) \zeta(s, q) dq dp ds\nonumber\\
& + \int_0^t \int_0^{\infty}  [ \mathcal{S}(p)  - \mathcal{R}(p) \zeta(s, p) ] dp ds,
\end{align*}
for every $\omega \in L^{\infty}(\mathbb{R}_{>0})$. This confirms that $ \zeta$ satisfies \eqref{definition}. Thus, $ \zeta$ is a weak solution to \eqref{SCEs}--\eqref{Initialdata}. Next, we need to show that $ \zeta$ satisfies \eqref{MCPS}. It can easily be proved  by using \eqref{PropMassbound}, Lemma \ref{LargeLemma}, \eqref{Convexp1}, and \eqref{Cond R S}. This completes the proof of the Theorem \ref{TheoremSCEs} (i).

\end{proof}

\begin{proof} \emph{of Theorem} \ref{TheoremSCEs} (ii): This part can be proved by using similar steps as in the proof of Theorem \ref{TheoremSCEs} (i). But, for completeness, we provide a short proof. The family $(\zeta_n)_{n>1}$ is relatively compact in $\mathcal{C}([0, T]_w; L^1(0, \lambda))$ for each $T>0$, can be shown by applying de la Vall\`{e}e Poussin theorem, Lemma \ref{LemmaUniformIntegrability}, \eqref{weakcontinuity}, the Dunford-Pettis theorem and a variant of the Arzel\`{a}-Ascoli theorem, see \cite{Vrabie:1995}. 
More precisely,
\begin{align}\label{weakconvergencep}
 \zeta_n \to \zeta \ \ \ \text{in} \ \ \mathcal{C}([0, T]_w: L^1(0, \lambda))\ \ \text{for each }\ T>0.
\end{align}
 The convergence \eqref{weakconvergencep} can be improved, by using Lemma \ref{Uboundlemma} to
\begin{align}\label{weakconvergencep1}
 \zeta_n \to \zeta \ \ \ \text{in} \ \ \mathcal{C}([0, T]_w: L^1(\mathbb{R}_{>0} )). 
\end{align}
Next, our aim is to show all the approximated integrals in \eqref{TSCEs} converge weakly to the original integrals in \eqref{SCEs}, respectively, for the product type kernel as in \eqref{ProductK}.

The convergence of  $\mathcal{J}_1^n(t)$, $\mathcal{J}_2^n(t)$, $\mathcal{J}_3^n(t)$, and $\mathcal{J}_4^n(t)$ can be shown similarly as for the linear kernel with little modification. Thus, we omit the detailed calculations of the convergence of these terms.

Now, $\mathcal{J}_5^n(t)$ can be evaluated, by applying \eqref{RateInjection}, Lemma \ref{LargeLemma}, and \eqref{Convexp2}, as 
\begin{align*}
|\mathcal{J}_5^n (t)| = & \bigg|\int_0^t  \int_{\lambda}^n   \omega(p) \mathcal{R}_n(p)  \zeta_n(s, p) dp ds \bigg| \nonumber\\
\le & \| \omega\|_{L^{\infty}(\mathbb{R}_{>0})  }  k  \frac{1}{(1+\lambda)^{ 1- \alpha} } \int_0^t   \int_{\lambda}^n  (1+p)  \zeta_n(s, p) dp ds \nonumber\\
\le & \| \omega\|_{L^{\infty}(\mathbb{R}_{>0})  }  k \frac{1}{(1+\lambda)^{1- \alpha} }  \Lambda(T)   t.
\end{align*}

As limit $\lambda  \to \infty$, then 
\begin{align}\label{PJ51}
|\mathcal{J}_5^n (t)|   \to 0.
\end{align}

Similarly, we can show that the following term goes to $0$ as $\lambda \to \infty$
\begin{align}\label{PJ52}
 \bigg|\int_0^t  \int_{\lambda}^{\infty}   \omega(p) \mathcal{R}(p)  \zeta(s, p) dp ds \bigg| 
\le & \| \omega\|_{L^{\infty}(\mathbb{R}_{>0})  }  k  \frac{1}{(1+\lambda)^{1-\alpha} } \int_0^t   \int_{\lambda}^n  (1+p)  \zeta(s, p) dp ds \nonumber\\
\le & \| \omega\|_{L^{\infty}(\mathbb{R}_{>0})  }  k \frac{1}{(1+\lambda)^{1-\alpha} }    \| \zeta \|_{L^1_{0, 1}(\mathbb{R}_{>0})} t.
\end{align}
Finally, combining \eqref{PJ51} and \eqref{PJ52} and taking first limit $\lambda \to \infty$ and then $ n \to \infty$, we see that
$\zeta$ satisfies \eqref{definition}. Therefore, $\zeta$ is a weak solution to \eqref{SCEs}--\eqref{Initialdata} in the sense of \eqref{definition} which completes the proof of Theorem \ref{TheoremSCEs}.
\end{proof}

\section{Uniqueness of weak solutions}
In this section, our aim is to prove Theorem \ref{UniquenessTheoremCRBK}. 
\begin{proof} We prove the Theorem \ref{UniquenessTheoremCRBK} by the method of the contrary. Suppose \eqref{SCEs}--\eqref{Initialdata} has no unique solution. 
 Assume that $\zeta$ and $\eta$ be two distinct solutions (in a weak sense) to \eqref{SCEs} with $\zeta^{in} = \eta^{in}$. Consider $\Delta := \zeta - \eta $ and $\text{sign}(\Delta) := L$. Let $\omega(p)  := \max\{1, p^{1/2}\}$, then consider the following integral:
\begin{align}\label{Uniqueness1}
& \frac{d}{dt} \int_0^{\infty} \omega(p) |\Delta(t, p)| dp =  \int_0^{\infty} \omega(p) L(t, p) \frac{\partial \Delta(t, p) }{\partial t}  dp \nonumber\\
= & \frac{1}{2}\int_0^{\infty} \int_0^{\infty} \mathcal{A}(p, q) L(t, p, q) \{ \zeta(t, p) \zeta(t, q) - \eta(t, p) \eta(t, q)  \}  dq dp \nonumber\\
& - \int_0^{\infty}\omega(p) \mathcal{R}(p) L(t, p) \Delta(t, p) dp \nonumber\\ 
= & \frac{1}{2}\int_0^{\infty} \int_0^{\infty} \mathcal{A}(p, q) L(t, p, q) \{ \zeta(t, p) ( \zeta(t, q) -  \eta(t, q) ) + \eta(t, q) (\zeta(t, p) -  \eta(t, p) ) \}  dq dp \nonumber\\
& - \int_0^{\infty}\omega(p) \mathcal{R}(p) L(t, p) \Delta(t, p) dp\nonumber\\
= & \frac{1}{2}\int_0^{\infty} \int_0^{\infty} \mathcal{A}(p, q) L(t, p, q) \zeta(t, p)   \Delta(t, q)     dq dp \nonumber\\
 & + \frac{1}{2}\int_0^{\infty} \int_0^{\infty} \mathcal{A}(p, q) L(t, p, q)  \eta(t, q) \Delta(t, p)   dq dp \nonumber\\
& - \int_0^{\infty}\omega(p) \mathcal{R}(p) |\Delta(t, p)| dp,
\end{align}
where 
\begin{align*}
 L(t, p, q) := \omega(p + q) \zeta(t, p+q) - \omega(p) \zeta(t, p) - \omega(q) \zeta(t, q).
\end{align*}
We can see that 
\begin{align}\label{Bound11}
 L(t, p, q) \Delta(t, q) \le \{ \omega(p+q)  + \omega(p) - \omega(q) \} | \Delta(t, q) | 
\end{align}
and
\begin{align}\label{Bound12}
L(t, p, q) \Delta(t, p) \le \{ \omega(p+q)  - \omega(p) + \omega(q) \} | \Delta(t, p) |.
\end{align}
Let us now discuss different cases:\\
\textbf{Case-1} For $p+q < 1$, by using \eqref{Bound11} and \eqref{UniqueThm}, it can see 
\begin{align}\label{Boundcase1a}
\mathcal{A}(p, q)  L(t, p, q) \Delta(t, q) \le & A (1+p)^{1/2}(1+q)^{1/2} \{ 1+1-1 \} | \Delta(t, q) |   \nonumber\\
\le &  2 A | \Delta(t, q) | \omega(q),
\end{align}
and similarly, by applying \eqref{Bound12} and \eqref{UniqueThm}, we estimate
\begin{align}\label{Boundcase1b}
\mathcal{A}(p, q)  L(t, p, q) \Delta(t, p) \le  2A | \Delta(t, p) | \omega(p).  
\end{align}
\textbf{Case-2} For $p < 1 $, $q < 1$ and $p+q \geq 1$, by using \eqref{Bound11} and \eqref{UniqueThm}, we evaluate
\begin{align}\label{Boundcase2a}
\mathcal{A}(p, q)  L(t, p, q) \Delta(t, q) \le & A(1+p)^{1/2} (1+q)^{1/2} \{ (p+q)^{1/2} + 1 -1 \} | \Delta(t, q) | \nonumber\\
\le & 4 A p \omega(q) | \Delta(t, q) |,
\end{align}
and similarly, by applying \eqref{Bound12} and \eqref{UniqueThm}, we find
\begin{align}\label{Boundcase2b}
\mathcal{A} (p, q)  L(t, p, q) \Delta(t, p) \le  4A \omega(p) |\Delta(t, p)|.  
\end{align}
\textbf{Case-3} For $p < 1 $ and $q \ge 1$, the application of \eqref{Bound11} and \eqref{UniqueThm} gives
\begin{align}\label{Boundcase3a}
\mathcal{A}(p, q)  L(t, p, q) \Delta(t, q)  \le & A(1+p)^{1/2} (1+q)^{1/2} \{ (p+q)^{1/2} + 1 -  q^{1/2} \} | \Lambda(t, q) | \nonumber\\
\le &  2 A q^{1/2} \{ 1+q^{1/2} + 1 -  q^{1/2} \} | \Lambda(t, q) | \nonumber\\
\le & 4 A  \omega(q) | \Delta(t, q) |,
\end{align}
and similar to \eqref{Boundcase2b}, we evaluate
\begin{align}\label{Boundcase3b}
\mathcal{A}(p, q)  L(t, p, q) \Delta(t, p) \le  4A q \omega(p) |\Delta(t, p)|.  
\end{align}
\textbf{Case-4} For $p \ge 1 $ and $q < 1$, then by using \eqref{Bound11} and \eqref{UniqueThm}, we evaluate
\begin{align}\label{Boundcase4a}
\mathcal{A}(p, q)  L(t, p, q) \Delta(t, q) \le & A(1+p)^{1/2} (1+q)^{1/2} \{ 1+ 2 p^{1/2} -1 \} | \Delta(t, q) | \nonumber\\
\le & 4 A p \omega(q) | \Delta(t, q) |,
\end{align}
and similarly, by applying \eqref{Bound12} and \eqref{UniqueThm}, we find
\begin{align}\label{Boundcase4b}
\mathcal{A}(p, q)  L(t, p, q) \Delta(t, p) \le  4A \omega(p) |\Delta(t, p)|.  
\end{align}
\textbf{Case-5} For $p \ge 1$ and $q \ge 1$, then we estimate the following by using \eqref{Bound11} and \eqref{UniqueThm}, as
\begin{align}\label{Boundcase5a}
 \mathcal{A}(p, q)  L(t, p, q) \Delta(t, q)  \le & A(1+p)^{1/2} (1+q)^{1/2} \{ 2 p^{1/2} \} | \Delta(t, q) | \nonumber\\
  \le & 4A p \omega(q) | \Delta(t, q) |, 
\end{align}
and similarly, by applying \eqref{Bound12} and \eqref{UniqueThm}, we have
\begin{align}\label{Boundcase5b}
\mathcal{A}(p, q)  L(t, p, q) \Delta(t, p) \le  4 A q \omega(p) | \Delta(t, p) |.
\end{align}
Inserting above values from \eqref{Boundcase1a}--\eqref{Boundcase5b} into \eqref{Uniqueness1}, we obtain
\begin{align}\label{Uniqueness2}
& \frac{d}{dt} \int_0^{\infty} \omega(p) |\Delta(t, p)| dp \nonumber\\
\le & 2A \int_0^{1} \int_0^{1-p}  \{ \omega(q) |\Delta(t, q)| \zeta(t, p)  +  \omega(p) |\Delta(t, p)| \zeta(t, q)  \}  dq dp \nonumber\\
& +  4 A \int_0^{1} \int_{1-p}^1  \{  p \omega(q) | \Delta(t, q) \zeta(t, p)  +  \omega(p) |\Delta(t, p)| \zeta(t, q)  \}  dq dp \nonumber\\
& + 4 A \int_0^{1} \int_1^{\infty}  \{   \omega(q) | \Delta(t, q) | \zeta(t, p)  +  q \omega(p) |\Delta(t, p)| \zeta(t, q)  \}  dq dp \nonumber\\
& + 4 A \int_1^{\infty} \int_0^1  \{   p \omega(q) | \Delta(t, q) | \zeta(t, p)  + \omega(p) |\Delta(t, p)| \zeta(t, q)  \}  dq dp \nonumber\\
& + 4 A  \int_1^{\infty} \int_1^{\infty}  \{ p \omega(q) | \Delta(t, q) | \zeta(t, p)  + q \omega(p) | \Delta(t, p) | \zeta(t, q)  \}  dq dp \nonumber\\
\le & 36 A \Lambda(T) \int_0^{\infty} \omega(p) |\Lambda(t, p)| dp.
\end{align}
An application of Gronwall's inequality to \eqref{Uniqueness2} and using $\zeta^{in} = \eta^{in}$, we conclude that
\begin{align*}
 \int_0^{\infty} \omega(p) |\Lambda(t, p)| dp \le e^{36 \Gamma(T) A t} \int_0^{\infty} \omega(p) |\Lambda(0, p)| dp=0.
\end{align*}
This assures that
\begin{align*}
  |\Lambda(t, p)| =0,\ \ \text{for} \ \omega \in L^{\infty}(\mathbb{R}_{>0} ),
\end{align*}
i.e $\zeta = \eta $ a.e.. This violet to our assumption. Hence, this completes the proof of Theorem \ref{UniquenessTheoremCRBK}.
\end{proof}



\end{document}